\newtheorem{theorem}{Theorem}[section]
\newtheorem{proposition}[theorem]{Proposition}
\newtheorem{lemma}[theorem]{Lemma}
\newtheorem{definition}[theorem]{Definition}
\newtheorem{example}[theorem]{Example}
\newtheorem{remark}[theorem]{Remark}
\newcommand{\been}{\begin{enumerate}}
\newcommand{\enen}{\end{enumerate}}
\newcommand{\beit}{\begin{itemize}}
\newcommand{\enit}{\end{itemize}}
\def\bal#1\eal{\begin{align}#1\end{align}}
\def\bal*#1\eal*{\begin{align}#1\end{align}}
\def\bedf#1\endf{\begin{definition}#1\end{definition}}
\newcommand{\uset}[3][0ex]{%
  \mathrel{\mathop{#3}\limits^{
    \vbox to#1{\kern+6.5\ex@
    \hbox{#2}\vss}}}}
\newcommand{\oset}[3][0ex]{%
  \mathrel{\mathop{#3}\limits^{
    \vbox to#1{\kern-1\ex@
    \hbox{$#2$}\vss}}}}
\begin{document}

\title{Multistationarity in\\cyclic sequestration-transmutation networks}
\author{
   Gheorghe Craciun \thanks{Department of Mathematics, University of
  Wisconsin-Madison, USA. {\tt craciun@math.wisc.edu}. Grant
  support from NSF DMS-1816238 and NSF DMS-2051568.}
  \and
Badal Joshi \thanks{Department of Mathematics, California State University San Marcos, USA. {\tt bjoshi@csusm.edu}.}
\and
Casian Pantea \thanks{Department of Mathematics, West Virginia University, USA. {\tt cpantea@math.wvu.edu}.}
  %\addtocounter{footnote}{-2}\footnotemark
\and
Ike Tan \thanks{Department of Mathematics, University of Michigan, USA. {\tt irtan@umich.edu}.}
  }

\maketitle

%%%%TODO
%%change brackets to round parantheses for matrices

\begin{abstract}
We consider a natural class of reaction networks which consist of reactions where either two species can inactivate each other (i.e., sequestration), or some species can be transformed into another (i.e., transmutation), in  a way that gives rise to a feedback cycle. We completely characterize the capacity of multistationarity of these networks. This is especially interesting because such networks provide simple examples of ``atoms of multistationarity'', i.e., minimal networks that can give rise to multiple positive steady states.
\end{abstract}
% {\tt
% Sequestration networks were introduced by Joshi and Shiu in \cite{joshi2015survey},
% We consider a natural extension to a class of reaction networks known as sequestration-transmutation networks. We classify the networks by existence or absence of the property of multistationarity. We find the atoms of multistationarity, which are minimally multistationary networks.}

{\bf Keywords: reaction networks, mass-action kinetics, general kinetics, multistationarity, sequestration-transmutation networks, atoms of multistationarity, VEGFR dimerization}

\section{Introduction}

An important problem in the theory of reaction networks is to identify the networks that allow multiple (stoichiometrically compatible) positive equilibria~\cite{ Banaji.2009aa, Banaji.2007aa,  banaji2010graph, banaji2016some, conradi2017graph, craciun2005multiple, craciun2006multiple, joshi2012simplifying, joshi2013atoms, mincheva2007graph, wiuf2013power, yu2018mathematical}. This phenomenon, also referred to as {\em multistationarity}, underlies switching behavior in biochemistry~\cite{angeli2004detection, craciun2006understanding, ferrell2002self}. In particular, multistationarity is necessary in order for a reaction system to be able to generate multiple outputs in response to different external signals or stimuli. One prominent approach for identifying multistationary networks, developed over the last decade, is that of {\em network inheritance}, which says that multistationarity in a large, complex network can be established via studying smaller component networks that are also multistationary. Precise conditions for inheritance of multistationarity have been established~\cite{banaji2018inheritance, joshi2013atoms}. This creates the possibility of ``lifting'' multistationarity from certain idealized network motifs to larger and more realistic networks. Classes of motifs that have been catalogued by their presence or absence of multistationarity include all open networks with one (reversible or irreversible) reaction with arbitrary stoichiometry \cite{joshi2013complete}, bimolecular open networks with two reactions (both reactions reversible or irreversible) \cite{joshi2013atoms}, fully open as well as  isolated sequestration networks in arbitrary number of species  and reactions \cite{joshi2015survey}, and certain small non-open reaction networks, notably those with two species and two reactions \cite{joshi2017small}. In this paper, we add a new class of motifs to this catalog: cyclic sequestration-transmutation (CST) networks. We establish precise conditions for when this class of networks admits multistationarity. Our results contribute to the theoretical understanding of multistationarity in sparse networks, but they also have practical consequences. For example, in Section \ref{sec:Appl} we use inheritance of multistationarity and our results on CST networks to prove multistationarity of a well-known VEGFR dimerization model \cite{VEGF}.

Furthermore, our analysis is not limited to the fully open case as in many previous studies, but extends to partially open and non-open or isolated versions of CST networks. Moreover, the kinetics that we consider are not only mass action kinetics, but  includes a much broader class. These general kinetics must satisfy only mild requirements, such as: in order for a reaction to take place all reactants must have positive concentration, and the rate of reaction must increase if the concentration of a reactant increases.

\section{Background and notation}
We introduce terminology and recall some important results that will be useful in our proofs below. 

\subsection{Reaction networks and kinetics}

The general form of a {\em reaction} is 
\begin{equation}\label{eq:genReact}
a_1X_1+a_2X_2+\ldots+a_nX_n\to b_1X_1+b_2X_2+\ldots+b_nX_n,
\end{equation}
where $X_1,\ldots X_n$ is a list of {\em species}, and $a=[a_1,\ldots, a_n]^T$ and $b=[b_1,\ldots, b_n]^T$ are nonnegative integer-valued vectors whose entries are called {\em stoichiometric coefficients}. Formal linear combinations of $X_1,\ldots X_n$ are called {\em complexes}; in particular 
$a\cdot X := a_1X_1+a_2X_2+\ldots+a_nX_n$ is called the {\em source complex} of (\ref{eq:genReact}). Species $X_i$ for which $a_i>0$ are called {\em reactant species}. Finally, $a$ and $b-a$ are called the {\em source vector}, respectively {\em reaction vector} of (\ref{eq:genReact}).

\smallskip

Given a list of (distinct) species $X=(X_1,\ldots, X_n)$, a {\em reaction network} is a finite list of reactions on $X_1,\ldots, X_n$. It is customary to impose that the source and product complexes differ for each reaction, that every species participates in at least one reaction, and that no reaction is listed multiple times. However, none of these assumptions is needed in this paper. 

For a reaction network with $m$ reactions and a fixed ordering of the reactions, the {\em source matrix}, or {\em left stoichiometric matrix} $\Gamma_l\in{\mathbb R}^{n\times m}$ has the $m$ source vectors as columns. Likewise the {\em stoichiometric matrix} $\Gamma$ is the $n\times m$ matrix whose columns are the $m$ reaction vectors of the reaction list. The image of the stoichiometric matrix is called the {\em stoichiometric subspace} of the network.

The vector of {\em concentrations} of $X_1,\ldots, X_n$ is denoted by $x\in\mathbb R^n_{\ge 0}$\footnote{Throughout this article we use the convention that species names are in the upper case while their concentrations are the corresponding lower case letter. For example, the concentrations of species $X_1,\ldots, X_n$,  are denoted $x_1,\ldots, x_n$. }.

We ascribe a rate to each reaction, an assignment that is referred to as {\em kinetics}. Under {\em mass action kinetics}, reaction rates are proportional to the product of the concentrations of reactants (taken with multiplicity). To be precise, the rate of reaction $a_1 X_1 + \ldots + a_n X_n \xrightarrow{k} b_1 X_1 + \ldots + b_n X_n$ is  $kx^a=kx_1^{a_1} \cdots x_n^{a_n}$.
Here $k$ is a positive constant that depends on the reaction, called {\em reaction rate constant}. For mass-action kinetics it is customary to indicate the reaction rate on top of the reaction arrow. 

Some of the results to follow hold under {\em general kinetics} \cite[Definition 4.5]{banaji2016some}, a large class of reaction rates that includes mass-action kinetics as a special case. General kinetics places minimal physical requirements on reaction rates, like ``concentrations do not become negative'', ``reactions proceed if and only if all reactants are present'', and ``reaction rates are nondecreasing with reaction concentration''. To be exact, $v$ is a {\em general kinetics} rate vector if %
\begin{enumerate}
\item $v$ is defined and $C^1$ on $\mathbb{R}^n_{\geq 0}$;
\item $v_j \geq 0$; $v_j(x) = 0$ if and only if $x_i=0$ for some reactant $X_i$ of reaction $j$;
\item $\partial v_j/\partial x_i=0$ if $X_i$ is not a reactant in reaction $j$; $\partial v_j/\partial x_i$ is non-negative on $\mathbb{R}^n_{\geq 0}$ and strictly positive on the interior of the positive orthant if $X_i$ is a reactant in reaction $j$.
\end{enumerate}

In deterministic spatially homogeneous models, $x$ varies with time  according to the ODE system 
\begin{equation}\label{eq:genKin}
\dot x = \Gamma v(x)
\end{equation}
where $v(x)=(v_1(x),\ldots v_m(x))$ is the vector of reaction rates, or {\em rate vector} (see Example~\ref{ex:odes} on the next page). 

A reaction of the form $X_i\to 0$ (by which $X_i$ is depleted or degraded) is called an {\em outflow reaction}; its reaction vector  $[0,\ldots, 0, -1,0,\ldots, 0]^T$ has a single nonzero entry at index $i$.
With mass-action kinetics the outflow reaction $X_i \to 0$  has rate $k x_i$. Likewise the {\em inflow reaction} $0 \to X_i$ has  reaction vector  $[0,\ldots, 0, 1,0,\ldots, 0]^T$, and constant rate under mass-action. Inflow and outflow reactions are referred to as {\em flow reactions}.

\begin{definition}[Open, fully open and closed networks]\label{def:openNetw}
A network which contains at least one flow reaction is called {\em open}. A network which contains inflow and outflow reactions $0\to X_i$ and $X_i \to 0$  for each species $X_i$ is called {\em fully open}. A network that is not open is called {\em closed}. 
\end{definition}

\begin{remark}
While ``fully open network''  is standard terminology in reaction networks, the meaning of ``open'' and ``closed'' network may vary in the literature. Our notion of ``open network'' is the same as that of \cite{craciun2006multiple}. Reactions of our ``closed'' networks are sometime referred to as ``true'' reactions \cite{craciun2005multiple}. 
\end{remark}

\begin{remark}\label{rem:block}
A reaction network in $n$ species which contains $R$ outflow reactions and $r_2$ inflow reactions has stoichiometric and reactant  matrices written in block form
$$\overline\Gamma=[\Gamma\vert -K\vert L] \text{ and }
\overline\Gamma_l=[\Gamma_l\vert K\vert 0]$$
where $K\in{\mathbb R}^{n\times R}$ and $L\in {\mathbb R}^{n\times r_2}$ are submatrices of the identity $I_n$.
If the network is fully open then $K=L=I_n.$

As above, throughout the paper we will denote by $\Gamma$ and $\Gamma_l$ the source and stoichiometric matrices of closed network, whereas $\overline\Gamma$ and $\overline\Gamma_l$ will be used for open networks. 
\end{remark}

\subsection{Compatibility classes and multistationarity}

Integrating \eqref{eq:genKin} with respect to time yields
$$x(t)=x(0)+\Gamma\int_{0}^T v(x(s))\text{d}s.$$
Under general kinetics, $x(t)$ is nonnegative for any $t\ge 0$, and so the solutions of \eqref{eq:genKin} are constrained to {\em compatibility classes}, i.e. sets of the form  $(x_0+\text{im}(\Gamma))\cap{\mathbb R}^n_{\ge 0}$, where $x_0\in{\mathbb R}_{\ge 0}^n$. 

Let $\cal R$ denote a reaction network with stoichiometric matrix $\Gamma$ and fix a general kinetics $v$. A {\em positive steady state} of $\cal R$ is a point $x^*\in\mathbb R^n_{>0}$ such that $$\Gamma v(x^*)=0.$$ 
A steady state $x^*$ is called {\em nondegenerate} if the {\em reduced Jacobian}, (i.e. the Jacobian  of the vector field $\Gamma v(x)$  restricted to the compatibility class of $x^*$) is nonzero \cite{banaji2016some}. Equivalently, if $r=\text{rank }\Gamma$ then  $x^*$ is nondegenerate if the the sum of the $r\times r$ principal minors of the Jacobian matrix $\Gamma Dv$ computed at $x^*$ is nonzero. We note that if the stoichiometric matrix $\Gamma\in {\mathbb R}^{n\times m}$ of $\cal R$ has full column rank $n$ then the reduced Jacobian coincide with the Jacobian of the vector field. We remark that the Jacobian matrix of a mass-action reaction network vector field has the convenient form 

\begin{equation}\label{eq:Jac}\left(\frac{\partial \Gamma v}{\partial x}\right)=\Gamma D_{v(x)}\Gamma_l^T D_{1/x},
\end{equation}
where $D_{v(x)}$ and $D_{1/x}$ are diagonal matrices with $(v_1(x),\ldots, v_m(x))$ and $(1/x_1,\ldots, 1/x_n)$ on the diagonals. Note that in this formula the inflow reactions can be excluded from $\Gamma$ and $\Gamma_l$ without changing the result.

\begin{definition}[multistationarity, nondegenerate multistationarity] Let $\cal R$ denote a reaction network with stoichiometric matrix $\Gamma$. 
\begin{enumerate}
\item ${\cal R}$ is {\em (nondegenerately) multistationary under mass-action kinetics} if there exists a choice of mass action kinetics $v$ (i.e. a choice of rate constants) such that \eqref{eq:genKin} has two distinct (nondegenerate) positive steady states within the same compatibility class.
\item ${\cal R}$ is {\em (nondegenerately) multistationary under general kinetics} if there exists a choice of general kinetics $v$ such that \eqref{eq:genKin} has two (nondegenerate) distinct positive steady states within the same compatibility class.
\end{enumerate}
\end{definition}

\begin{example}\label{ex:odes}
Consider for example the reaction network
% %
\begin{equation}
\label{eq:ex3}
2X_1+X_2\overset{k_1}{\to} 3X_1,\  X_1\overset{k_2}{\to} X_2, 
\end{equation}
% %
one of the simplest networks with multistationarity (see also \cite{joshi2013atoms, banaji2018inheritance}).

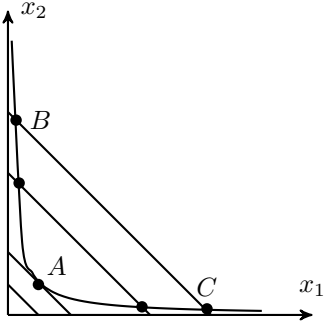
\begin{figure}
\begin{center}
\begin{tikzpicture}[scale=1.5, thick, >=stealth', scale = 1.8]
\draw[->] (0,0) -- (1.5,0);
\draw[->] (0,0) -- (0,1.5);
\draw[scale=0.5,domain=0.037:2.5,smooth,variable=\y]  plot ({\y},{.1/\y});
\draw (0,.15) -- (.15,0);
\draw (0,.31) -- (.31,0);
\draw (0,.7) -- (.7,0);
\draw (0,1) -- (1,0);
\fill (.15,.15) circle (.8pt);
\fill (.055,.65) circle (.8pt);
\fill (.04,.96) circle (.8pt);
\fill (.66,.04) circle (.8pt);
\fill (.98,.03) circle (.8pt);
\node (a) at (.24,.24) {$A$};
\node (a) at (.16,.96) {$B$};
\node (a) at (.98,.14) {$C$};
\node (a) at (1.5,.13) {$x_1$};
\node (a) at (.13, 1.5) {$x_2$};
\end{tikzpicture}
\end{center}
\caption{Multistationarity of reaction network ({\ref{eq:ex3}}). Set $k_1=k_2=1$. The compatibility class $x_1 + x_2=T$ contains no positive steady states for $T<2$, two nondegenerate steady states for $T>2$, and a single degenerate equilibrium for $T=2$.}
\label{fig:ex3}
\end{figure}

The stoichiometric matrix and source matrix of the network are
$$
\Gamma=
\begin{bmatrix}
1 &-1\\
-1 &1
\end{bmatrix},
\quad 
\Gamma_l=
\begin{bmatrix}
2 &1\\
1 &0
\end{bmatrix}
$$
and the steady state manifold $k_1x_1^2x_2-k_2x_1=0$ intersects the positive quadrant along the curve $x_1x_2=k_2/k_1$. The positive compatibility classes are obtained by intersecting the positive quadrant with cosets of $\text{span}([1,-1]^T)$  i.e. lines of the form $x_1 + x_2=T$. Compatibility classes may contain no positive steady states, a single degenerate steady state, or two nondegenerate steady states (see Figure \ref{fig:ex3}). 
\end{example}

To be a little more precise, one computes the Jacobian matrix of the system as 
$$
\Gamma Dv=
\begin{bmatrix}
1 &-1\\
-1 &1
\end{bmatrix}
\begin{bmatrix}
2k_1x_1x_2 &k_1x_1^2\\
k_2 &0
\end{bmatrix}=
\begin{bmatrix}
2k_1x_1x_2-k_2 &k_1x_1^2\\
-2k_1x_1x_2+k_2 &-k_1x_1^2
\end{bmatrix}
$$
For simplicity set $k_1=k_2=1$ (but the same calculation can be done for any choice of $k_1$ and $k_2$). Since $\text{rank }\Gamma=1,$ the reduced Jacobian at a steady state equals $\text{Tr}(\Gamma Dv)=2x_1x_2-x_1^2-1=1-x_1^2$. It follows that the only degenerate steady state is $(1,1).$

\subsection{Ruling out multistationarity: the injectivity property}

A particularly successful approach in the study of multistationarity has been that of {\em injective} reaction networks, i.e. reaction networks  for which the corresponding vector field  is injective  on each compatibility class, for any choice of kinetics.

\begin{definition}[injective reaction networks] Let $\cal R$ denote a reaction network with stoichiometric matrix $\Gamma$. 
\begin{enumerate}
\item ${\cal R}$ is {\em injective under mass-action} if for any choice of mass-action kinetics $v(x)$, the restriction of the vector field $f(x)=\Gamma v(x)$ to any positive compatibility class is injective.

\item ${\cal R}$ is {\em injective under general kinetics} if for any choice of general kinetics $v(x)$, the restriction of the vector field $f(x)=\Gamma v(x)$ to any positive compatibility class is injective.
\end{enumerate}
\end{definition}

Since mass action kinetics are a subclass of general kinetics, it is clear that a network that is injective under general kinetics is injective under mass action kinetics. 
However, a network may fail to be injective under general kinetics while being injective under mass action kinetics \cite{banaji2016some}.

An injective reaction network cannot be multistationary, since  requires that two different points in the same positive compatibility class be both mapped by $f$ to zero. Note however that injectivity is  not equivalent to the lack of capacity for multiple positive equilibria \cite{craciun2005multiple}.

The study of injective reaction networks was started by Craciun and Feinberg for fully open networks \cite{craciun2005multiple} and has since been extended  by work of many authors \cite{banaji2016some, Mueller.2016aa, joshi2012simplifying, feliu2013simplifying, wiuf2013power,  craciun2006multiple, Shinar.2012aa, Banaji.2007aa}.  The characterization of injectivity we give in Theorem \ref{thm:inj} is that of \cite[Theorems 3, 5]{banaji2018inheritance}.  For different versions of this result and  for other related results the reader is referred to  \cite{joshi2012simplifying,Mueller.2016aa,Shinar.2012aa}.

For a matrix $A\in{\mathbb R}^{n\times m}$ and two sets $\alpha\subseteq \{1,\ldots, n\}, \beta\subseteq \{1,\ldots, m\}$ of the same cardinality, we denote by $A[\alpha\vert\beta]$ the minor of $A$ corresponding to rows $\alpha$ and columns $\beta$. We also let ${\mathcal Q}(A)$ denote the set of real $n\times m$ matrices whose entries have the same sign ($+, -$ or 0) as the corresponding entry in $A$.

\begin{theorem}\label{thm:inj}
\begin{enumerate} Let $\cal R$ be a reaction network with stoichiometric matrix $\Gamma$ and reactant matrix $\Gamma_l$. Let $r=\text{rank } \Gamma.$
\item ${\cal R}$ is injective under mass-action if and only if
the products
$$\Gamma[\alpha\vert \beta]\Gamma_l[\alpha\vert \beta]$$
have the same sign for all choices of $\alpha\subseteq \{1,\ldots, n\}$ and $\beta\subseteq \{1,\ldots, m\}$ with $\vert\alpha\vert=\vert\beta\vert=r$, and at least one such product is nonzero.
\item $\cal R$ is injective under general kinetics if and only if
for any $A\in{\mathcal Q}(\Gamma_l)$
the products
$$\Gamma[\alpha\vert\beta]A[\alpha\vert\beta]$$
have the same sign for all choices of $\alpha\subseteq \{1,\ldots, n\}, \beta\subseteq \{1,\ldots, m\}$ with $\vert\alpha\vert=\vert\beta\vert=r$,  and at least one such product is nonzero.
\end{enumerate}
\end{theorem}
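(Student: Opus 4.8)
This theorem is \cite[Theorems 3 and 5]{banaji2018inheritance}; to prove it, the plan is to reduce injectivity on compatibility classes to the nonvanishing of a family of reduced Jacobians, and then to convert that nonvanishing into the stated determinantal sign conditions by means of the Cauchy--Binet formula.

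The first step reduces the problem to a statement about Jacobians. Fix a kinetics $v$, in the mass-action class or in the general class, and a positive compatibility class $U=(x_0+\operatorname{im}\Gamma)\cap\mathbb{R}^n_{>0}$; this set is convex because $\operatorname{im}\Gamma$ is a subspace, so for $x,y\in U$ the mean value theorem applied along the segment joining them gives $f(x)-f(y)=\Gamma\big(v(x)-v(y)\big)=\Gamma M\,(x-y)$. Here, under general kinetics, $M=\int_0^1 Dv\big(y+t(x-y)\big)\,dt$ has the sign pattern of $\Gamma_l^T$ by axioms (2)--(3) of general kinetics (and is strictly positive on its support), while under mass action a logarithmic-coordinate version of the mean value argument instead produces an $M$ of the special form $D_a\Gamma_l^T D_b$ with $a\in\mathbb{R}^m_{>0}$ and $b\in\mathbb{R}^n_{>0}$ (compare \eqref{eq:Jac}). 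Consequently $\mathcal{R}$ is injective, for the kinetics class in question, if and only if for every admissible $M$ the linear map $\Gamma M$ is nonsingular on $\operatorname{im}\Gamma$, i.e.\ the reduced Jacobian never vanishes. The forward implication is immediate from the displayed identity; the converse needs a Gale--Nikaido-type global univalence lemma (nonvanishing of the reduced Jacobian on a convex domain forces injectivity) for one direction, and, for the other, a construction realizing a prescribed admissible $M$ as a genuine mean value Jacobian together with an argument unfolding a singular reduced Jacobian into two distinct positive steady states within one compatibility class.

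The second step expands the reduced Jacobian. Restricting $\Gamma M$ to $\operatorname{im}\Gamma$ --- equivalently, bordering it by a basis of a complement of its image --- and expanding the resulting determinant by the Cauchy--Binet formula writes it as a linear combination of products $\Gamma[\alpha\vert\beta]\,M[\beta\vert\gamma]$, where $\alpha$ and $\gamma$ range over the $r$-element subsets of the species and $\beta$ over the $r$-element subsets of the reactions. One then requires this expression to be nonzero for every admissible $M$ and analyzes the requirement by rescaling the parameters that $M$ carries: under mass action, the reaction rate constants and the species concentrations (so that, $M$ being $D_a\Gamma_l^T D_b$, each minor $M[\beta\vert\gamma]$ equals a monomial in $(a,b)$ times $\Gamma_l[\gamma\vert\beta]$); under general kinetics, the free entries of the sign-pattern matrix $A\in\mathcal{Q}(\Gamma_l)$ representing $M^T$. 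In either case the expression becomes a polynomial on a positive orthant, and such a polynomial is nowhere zero precisely when all of its nonzero coefficients have one and the same sign. Identifying those coefficients, up to positive factors, with the products $\Gamma[\alpha\vert\beta]\,\Gamma_l[\alpha\vert\beta]$ yields Part~1, and with the products $\Gamma[\alpha\vert\beta]\,A[\alpha\vert\beta]$ ranging over $A\in\mathcal{Q}(\Gamma_l)$ yields Part~2.

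Of these, the second step is the routine one: the Cauchy--Binet bookkeeping plus the elementary fact that a polynomial on a positive orthant is sign-definite and nonvanishing exactly when its nonzero coefficients all share a sign. The real obstacle is the converse half of the first step --- showing that once one product $\Gamma[\alpha\vert\beta]\,\Gamma_l[\alpha\vert\beta]$ (or $\Gamma[\alpha\vert\beta]\,A[\alpha\vert\beta]$) disagrees in sign with another, one can exhibit an actual mass-action (respectively general) kinetics and two actual distinct concentration vectors in a single compatibility class at which $f$ takes equal values. This realizability-and-unfolding step, which supplies the ``necessary'' direction, together with the Gale--Nikaido input for the ``sufficient'' direction, is where the content of the theorem resides.
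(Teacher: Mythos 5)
The paper does not prove this statement at all: Theorem~\ref{thm:inj} is quoted as background, with the proof delegated to the cited reference (\cite[Theorems 3, 5]{banaji2018inheritance}). So there is no in-paper argument to compare yours against; what can be said is that your outline reconstructs the standard strategy of that literature (mean-value factorization $f(x)-f(y)=\Gamma M(x-y)$ with $M$ constrained to the sign class of $\Gamma_l^T$, or to $D_a\Gamma_l^T D_b$ under mass action via the logarithmic mean, followed by a Cauchy--Binet expansion of the reduced Jacobian and a ``polynomial on the positive orthant is nonvanishing iff its nonzero coefficients share a sign'' argument), and that strategy is the right one.

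As a proof, however, what you have written is a plan rather than an argument, and you say so yourself: the entire ``only if'' half --- producing, from a single sign disagreement among the products $\Gamma[\alpha\vert\beta]\Gamma_l[\alpha\vert\beta]$ (resp.\ $\Gamma[\alpha\vert\beta]A[\alpha\vert\beta]$), an explicit kinetics and two distinct positive points in one compatibility class with equal images --- is only named, not carried out, and that is where essentially all of the difficulty of the theorem lives. Two further points need repair even in the sketch. First, the appeal to a ``Gale--Nikaido-type global univalence lemma'' is both unnecessary and wrong as stated: nonvanishing of a Jacobian determinant on a convex domain does \emph{not} force injectivity in general (Gale--Nikaido requires a P-matrix-type hypothesis), and in your setup no such theorem is needed, since the mean-value identity already gives sufficiency directly --- if $\Gamma M$ is nonsingular on $\operatorname{im}\Gamma$ for every admissible $M$, then $f(x)=f(y)$ with $x-y\in\operatorname{im}\Gamma$ forces $x=y$. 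Second, non-injectivity requires only two compatible points with $f(x)=f(y)$, not two positive steady states; you state this correctly at the end but phrase the ``unfolding'' step earlier in terms of steady states, which is a stronger and unneeded requirement. With those corrections the outline is consistent with the proofs in the cited source, but to count as a proof it would have to supply the realizability/necessity construction.
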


\subsection{The Jacobian optimization criterion}

The following sufficient condition for multistationarity  \cite[Theorem~4.1]{craciun2005multiple} will be used in some of our proofs.  Here we follow the formulation of this result given in \cite[Theorem~5]{banaji2016some}): 

\begin{theorem}\label{thm:craciunSuff} Consider a fully open reaction network $\overline{\cal R}$ and let $\overline\Gamma$ and $\overline\Gamma_l$ denote the  stoichiometric and source matrices corresponding to $\overline{\cal R}$ {\em with all inflow reactions omitted}. Suppose there exists a positive diagonal matrix $D$ such that
$$(-1)^n\det(\overline\Gamma D\overline\Gamma_l^T)<0 \mbox{ and }\overline\Gamma D{\bf 1}\le 0$$
where ${\bf 1}\in{\mathbb R}^{2n\times 1}$ denotes the vector of 1's. Then the fully open CST admits multiple positive equilibria under mass action.
\end{theorem}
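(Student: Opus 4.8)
The plan is to turn the matrix $D$ into a choice of mass-action rate constants for $\overline{\mathcal R}$ that displays one positive steady state of the ``wrong'' Brouwer index, and then to run the Craciun--Feinberg degree argument to force a second positive steady state in the same compatibility class.

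First I would read off the rate constants. Write $D=\diag(d_1,\dots,d_n,\,d_{n+1},\dots,d_{2n})$ with every $d_\ell>0$, the first $n$ entries indexed by the true reactions and the last $n$ by the outflows, so that with inflows omitted $\overline\Gamma=[\Gamma\mid -I_n]$ and $\overline\Gamma_l=[\Gamma_l\mid I_n]$ (Remark~\ref{rem:block}). Take the test point $x^\ast=\mathbf 1$, give rate constant $d_j$ to the $j$-th true reaction and $d_{n+i}$ to the outflow $X_i\to 0$, and set the inflow-constant vector to $q:=-\overline\Gamma D\mathbf 1$. Then the diagonal matrix of reaction rates at $x^\ast$ is exactly $D$, so by \eqref{eq:Jac} the Jacobian of the vector field $f$ at $x^\ast$ is $\overline\Gamma D\overline\Gamma_l^{T}D_{1/x^\ast}=\overline\Gamma D\overline\Gamma_l^{T}$, and the non-inflow part of $f$ at $x^\ast$ equals $\overline\Gamma D\mathbf 1=-q$. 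The hypothesis $\overline\Gamma D\mathbf 1\le 0$ gives $q\ge 0$, and bumping up the outflow entries $d_{n+i}$ by an arbitrarily small amount (which preserves the open condition $(-1)^n\det(\overline\Gamma D\overline\Gamma_l^{T})<0$) makes $q>0$, so $\overline{\mathcal R}$ is a genuine fully open mass-action system; then $f(x^\ast)=\overline\Gamma D\mathbf 1+q=0$, so $x^\ast$ is a positive steady state, and $(-1)^n\det(\overline\Gamma D\overline\Gamma_l^{T})<0$ says it is nondegenerate with $\operatorname{sign}\det(\text{Jacobian at }x^\ast)=(-1)^{n+1}$. Since $\overline{\mathcal R}$ is fully open, $\operatorname{im}\overline\Gamma=\mathbb R^n$, so $\mathbb R^n_{>0}$ is a single compatibility class and it suffices to produce one more positive steady state.

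For that I would use the degree-theoretic engine behind this result (cf.\ \cite{craciun2005multiple}). Because $\overline{\mathcal R}$ is fully open, near each coordinate hyperplane $\{x_i=0\}$ the inflow term $q_i>0$ together with the outflow term $-d_{n+i}x_i$ dominates the $i$-th component of $f$, so $f$ is nonvanishing on a neighborhood of $\partial\mathbb R^n_{\ge0}$ inside the orthant and is there homotopic, through nonvanishing maps, to the affine ``reference'' field $f_0(x)=q-\diag(d_{n+1},\dots,d_{2n})x$, whose unique zero has Brouwer index $(-1)^n$. Choosing a bounded region $U\subset\mathbb R^n_{>0}$ whose relevant boundary sits in this controlled zone, which contains $f_0$'s zero but excludes $x^\ast$, one gets $\deg(f,U,0)=(-1)^n\ne 0$, hence $f$ has a zero in $U$; being in $U$ it is distinct from $x^\ast$, giving the second positive steady state and hence multistationarity under mass action. (For $n=1$ this is just the intermediate value theorem: $f(0^+)=q\ge 0$ while $f$ is negative just below $x^\ast$ because $(-1)^1f'(x^\ast)<0$, so $f$ vanishes somewhere in $(0,x^\ast)$.) The degenerate cases — $0$ not a regular value, the two steady states colliding, or a zero component of $q$ — are cleared by replacing $q$ with a nearby positive regular value, using that $x^\ast$ is nondegenerate and therefore persists, and that multistationarity with a nondegenerate steady state is robust. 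I would also observe that this is consistent with Theorem~\ref{thm:inj}: the ``all-outflows'' term in the Cauchy--Binet expansion of $\det(\overline\Gamma D\overline\Gamma_l^{T})$ equals $(-1)^n\prod_i d_{n+i}>0$, so $(-1)^n\det(\overline\Gamma D\overline\Gamma_l^{T})<0$ already forces non-injectivity of $\overline{\mathcal R}$, and the extra hypothesis $\overline\Gamma D\mathbf 1\le 0$ is precisely what pins that non-injectivity to a pair of genuine steady states.

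The delicate step, which I expect to be the real work, is making this degree count legitimate over the \emph{non-compact} domain $\mathbb R^n_{>0}$: one must produce the region $U$ above — equivalently, a uniform a priori bound confining the relevant steady states to a compact subset of the open orthant together with control of $f$ on the complementary part of $\partial U$. This cannot be done by a naive ``turn the true reactions on/off'' homotopy, since solutions can escape to infinity along such a homotopy (already for $n=1$, where the second positive root of a quadratic $t\,kx^{2}-cx+q$ comes in from $+\infty$ as $t$ leaves $0$); one instead follows the continuation argument of \cite{craciun2005multiple}, in which full-openness — an inflow and an outflow for every species — is exactly what makes the confinement work. Everything else is bookkeeping with \eqref{eq:Jac} and Cauchy--Binet.
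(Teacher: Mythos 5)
First, a point of orientation: the paper does not prove this statement at all --- it is quoted (as Theorem~\ref{thm:craciunSuff}) from \cite{craciun2005multiple} in the formulation of \cite{banaji2016some}, and used as a black box. So there is no internal proof to compare against, and your attempt has to stand on its own. The parts of your argument that can be checked directly are correct and are exactly how the hypotheses are meant to be used: reading the entries of $D$ as rate constants so that $D_{v(\mathbf 1)}=D$, setting the inflow vector $q=-\overline\Gamma D\mathbf 1\ge 0$ and bumping the outflow entries to make $q>0$ while preserving the strict determinant inequality, concluding via \eqref{eq:Jac} that $x^*=\mathbf 1$ is a positive steady state whose Jacobian $\overline\Gamma D\overline\Gamma_l^T$ has determinant of sign $(-1)^{n+1}$, and noting that full openness makes $\mathbb R^n_{>0}$ a single compatibility class. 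Your consistency remark via Cauchy--Binet and Theorem~\ref{thm:inj} is also right.

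The gap is the degree count itself, which is the entire analytic content of the theorem. Your intermediate claim that $f$ is nonvanishing on ``a neighborhood of $\partial\mathbb R^n_{\ge 0}$ inside the orthant'' and homotopic there to $f_0(x)=q-\diag(d_{n+1},\dots,d_{2n})x$ is false as stated for an unbounded neighborhood: with $x_i$ small but a neighboring coordinate large, a sequestration term such as $-a_ik_ix_i^{a_i}x_{i+1}^{b_{i+1}}$ can overwhelm $q_i$, so the boundary control only holds on compact pieces, and the real difficulty is precisely the outer (large-$\lvert x\rvert$) part of $\partial U$ --- your own $n=1$ quadratic example shows zeros escaping to infinity along the naive homotopy. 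You acknowledge this and then defer the construction of $U$ (equivalently, the a priori confinement estimates that make $\deg(f,U,0)=(-1)^n$ legitimate) to ``the continuation argument of \cite{craciun2005multiple}''. But since the statement being proved \emph{is} the theorem of \cite{craciun2005multiple}, that deferral is circular: as a standalone proof the decisive lemma is missing, and everything you have actually established is the (correct, but routine) reduction of the hypotheses to ``there is a positive steady state of index $(-1)^{n+1}$ in a fully open mass-action system''. To close the gap you would need to prove the confinement/degree statement for fully open networks yourself (or cite it as an external tool distinct from the theorem under proof), e.g.\ by exploiting that every species has an outflow to build an explicit bounded invariant-type region on whose outer boundary a homotopy scaling down the non-flow reaction rates is admissible.
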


\subsection{Inheritance of multistationarity} 

The following useful result states that nondegenerate multistationarity of a mass-action network survives when we add all possible flow reactions (see \cite[Theorem 2]{craciun2006multiple}, \cite[Corollary 3.6]{joshi2013atoms}, \cite[Theorem 2]{banaji2018inheritance}).

\begin{theorem}\label{thm:inheritance} ({\em Adding inflows and outflows of all species}). Let $\cal R$ denote a mass-action reaction network with species $X_1,\ldots, X_n$.
Suppose we create ${\cal R}'$
from $\cal R$ by adding to $\cal R$ all the flow reactions $0\rightleftharpoons X_1$, $\ldots,$ $0\rightleftharpoons X_n$. If $\cal R$ admits multiple positive nondegenerate steady states, then so does ${\cal R}'$.
\begin{enumerate}
\item (Adding a new species with inflow and outflow  \cite[Theorem 4.2]{joshi2013atoms} \cite[Theorem 4]{banaji2018inheritance}). Suppose we create ${\cal R}'$
from $\cal R$, by adding into some reactions of $\cal R$ the new species $Y$ in an arbitrary way, while also adding the new reaction $0\rightleftharpoons Y$ . If $\cal R$ is nondegenerately multistationary, then so is ${\cal R}'$.
\item (Adding a dependent reaction \cite[Theorem 3.1]{joshi2013atoms} \cite[Theorem 1]{banaji2018inheritance} ). Suppose we create ${\cal R}'$
from $\cal R$, by adding to $\cal R$ a
new irreversible reaction whose reaction vector lies in the stoichiometric subspace of $\cal R$. If $\cal R$ nondegenerately multistationary, then so is ${\cal R}'$. In particular, adding the reverse of any reaction in $\cal R$ preserves nondegenerate multistationarity. 
\end{enumerate}
\end{theorem}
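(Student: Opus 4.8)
The plan is to prove all three parts by one perturbation scheme (these are the cited results; the sketch below streamlines the known arguments). The idea is: introduce a small parameter into $\mathcal{R}'$, observe that the perturbed dynamics is trapped inside the affine hull of a single compatibility class of $\mathcal{R}$, and continue each of the two steady states of $\mathcal{R}$ by the implicit function theorem — invoking nondegeneracy exactly where invertibility of the reduced Jacobian of $\mathcal{R}$ (which, on that affine hull, is the full Jacobian of the restricted vector field) is needed.

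For the main statement I would start from mass-action constants $k$ making $\dot x=\Gamma v_k(x)$ have nondegenerate positive steady states $x^a\neq x^b$ with $x^b-x^a\in S:=\operatorname{im}\Gamma$. In $\mathcal{R}'$ I assign every outflow $X_i\to0$ the rate constant $\varepsilon>0$ and every inflow $0\to X_i$ the rate constant $\varepsilon x^a_i>0$, so the flows contribute $\varepsilon(x^a-x)$ to $\dot x$. Then $x$ is a steady state of $\mathcal{R}'$ iff $\Gamma v_k(x)=\varepsilon(x-x^a)$; the left side lies in $S$, so every steady state of $\mathcal{R}'$ lies in $x^a+S$, the affine hull of the compatibility class of $x^a$ and $x^b$. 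On that subspace ($x=x^a+\xi$, $\xi\in S$) the map $\xi\mapsto\Gamma v_k(x^a+\xi)-\varepsilon\xi$ is $S$-valued, vanishes at $\xi=0$ for every $\varepsilon$ (so $x^a$ stays a steady state of $\mathcal{R}'$), and its $\xi$-derivative at $0$ is the reduced Jacobian of $\mathcal{R}$ at $x^a$, invertible by nondegeneracy. The parallel map at $x^b$, $\xi\mapsto\Gamma v_k(x^b+\xi)+\varepsilon(x^a-x^b-\xi)$, is still $S$-valued (because $x^a-x^b\in S$) with invertible $\xi$-derivative at $0$, so the implicit function theorem produces a steady state $x^b(\varepsilon)\to x^b$ of $\mathcal{R}'$. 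For small $\varepsilon>0$ the two steady states $x^a$ and $x^b(\varepsilon)$ are positive and distinct, and they automatically lie in a common compatibility class, since $\mathcal{R}'$ is fully open (its stoichiometric subspace is $\mathbb{R}^n$, being spanned by the outflow vectors $-e_i$). For nondegeneracy — which for the fully open $\mathcal{R}'$ means $\det(\Gamma Dv_k-\varepsilon I)\neq0$ — I would use $\det(\Gamma Dv_k(x)-\varepsilon I)=\sum_l(-\varepsilon)^l c_{n-l}(x)$, with $c_j(x)$ the sum of the $j\times j$ principal minors of $\Gamma Dv_k(x)$: the terms with $l<n-r$ vanish identically since $\operatorname{rank}\Gamma Dv_k(x)\leq r$, leaving $(-\varepsilon)^{n-r}(c_r(x)+O(\varepsilon))$, and $c_r$ at $x^a$ (resp. $x^b$) is precisely $\mathcal{R}$'s nonzero nondegeneracy quantity.

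Part 2 (adding a dependent irreversible reaction with reaction vector $w\in S$ and mass-action rate $\rho(x)$) I would handle identically, using the perturbation $\varepsilon\,\rho(x)\,w$ in place of the flow term: since $w\in S$ the perturbed dynamics stays in $x^a+S$, the stoichiometric subspace — hence the compatibility classes — of $\mathcal{R}'$ equal those of $\mathcal{R}$, and the implicit function theorem on $S$ applied to $\xi\mapsto\Gamma v_k(x^a+\xi)+\varepsilon\,\rho(x^a+\xi)\,w$ and its $x^b$-analogue yields two distinct nondegenerate positive steady states in one class for small $\varepsilon>0$; taking the new reaction to be the reverse of an existing one (whose reaction vector is minus the original) recovers that special case. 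Part 1 (adding $Y$ into reactions in an arbitrary way, plus $0\rightleftharpoons Y$) needs a genuine two-scale version: I would give $0\to Y$ and $Y\to0$ the common rate constant $\lambda$ (equilibrium value $1$) with $\lambda$ large, keeping a multistationary constant vector $k^0$ of $\mathcal{R}$ on the $Y$-modified reactions, so $\dot y=\lambda(1-y)+(\text{bounded near }y=1)$; solving $\dot y=0$ gives $y=1+O(1/\lambda)$, substituting into the $X$-equations gives $\dot x=\Gamma v_{k^0}(x)+O(1/\lambda)$ with invertible reduced Jacobian at $x^a$ and at $x^b$, and the $Y$-diagonal Jacobian entry is $-\lambda+O(1)$. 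The $X$-reaction vectors are unchanged and $0\rightleftharpoons Y$ contributes $\pm e_Y$, so the stoichiometric subspace of $\mathcal{R}'$ is $S\oplus\mathbb{R}e_Y$; the implicit function theorem on the affine hull of the relevant compatibility class then continues $(x^a,1)$ and $(x^b,1)$ to two distinct nondegenerate positive steady states of $\mathcal{R}'$ in one class, for $\lambda$ large.

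The hard part, common to all three, is the singularity of $\Gamma Dv$ whenever $\mathcal{R}$ is not fully open: the implicit function theorem cannot be applied to the unrestricted steady-state map, so everything depends on first confining the perturbed dynamics to the affine hull of one compatibility class — which is why the added reaction vector must lie in $S$ in part 2, and why the flow rate constants must be chosen as above in the main statement — together with the determinant identity that pins the leading $\varepsilon$-coefficient of the fully open Jacobian to $\mathcal{R}$'s nondegeneracy quantity. I expect the fiddliest point to be the bookkeeping in part 1, namely checking that the full reduced Jacobian of $\mathcal{R}'$ (not just its $X$-block) stays nonsingular as $\lambda\to\infty$; this I would settle with a Schur-complement estimate showing its determinant behaves like $-\lambda$ times the nonzero reduced-Jacobian determinant of $\mathcal{R}$.
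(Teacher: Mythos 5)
The paper does not prove Theorem~\ref{thm:inheritance}: it is quoted as a known inheritance result, with citations to \cite{craciun2006multiple}, \cite{joshi2013atoms} and \cite{banaji2018inheritance}, so there is no in-paper argument to compare against. Your perturbation-plus-implicit-function-theorem scheme is essentially the argument of those references and it is correct: confining the perturbed vector field to the affine hull of a single compatibility class (choosing inflow rates $\varepsilon x^a_i$ so that $x^a$ remains an exact equilibrium, using $w\in S$ for the dependent reaction, and restricting to a coset of $S\oplus\mathbb{R}e_Y$ in the added-species case), continuing the equilibria via the IFT through invertibility of the reduced Jacobian, and extracting nondegeneracy of the perturbed states from $\det(\Gamma Dv-\varepsilon I)=(-\varepsilon)^{n-r}\left(c_r+O(\varepsilon)\right)$, respectively from the Schur-complement estimate giving $\sim-\lambda\det A$, is exactly how these results are established. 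The only step you assert without argument is nondegeneracy of the perturbed equilibria in the dependent-reaction case, but it follows by the same continuity-of-$c_r$ reasoning you already spelled out for the fully open case, since the perturbed Jacobian there still has image in $S$.
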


We note that the result in Theorem \ref{thm:inheritance} is one of the simplest examples of modifications that preserve nondegenerate multistationarity. For a more complete list of such results the reader is referred to  \cite{banaji2018inheritance}.

\section{Cyclic sequestration-transmutation networks}\label{sec:seqnet}

We are interested in two special reaction types, sequestration and transmutation. Each involves exactly two distinct species. In the first case both appear on the reactant side, while in the latter case one appears on the reactant end and the other on the product end. 
The central object of study in this paper is the set of cyclic sequestration-transmutation networks. 

\begin{definition}{\em (CST networks)}\label{def:CST} \quad 
\begin{enumerate}
\item A {\em sequestration reaction} is a reaction of the type $aX + bY \to 0$ for positive integers $a,b$ and species $X,Y$.
\item A {\em transmutation reaction} is a reaction of the type $aX \to bY$ for positive integers $a,b$ and distinct species $X,Y$.
\item A reaction network on species $X_1,\ldots, X_n$ (with the convention $X_{n+1}=X_1$) containing reactions $R_1,\ldots, R_n$, where for each $i\in\{1,\ldots, n\}$ $R_i$ is either a sequestration  reaction
$$a_i X_i + b_{i+1} X_{i+1} \to 0$$
or a transmutation reaction
$$a_i X_i \to b_{i+1} X_{i+1},$$
is called a  {\em closed CST  (cyclic sequestration-transmutation) network}. If in addition, the network contains at least one flow reaction, then it is called an {\em open  CST network}. A {\em CST network} could mean either a closed or an open CST network. A {\em fully open CST network} contains inflow and outflows for all its species. 
\end{enumerate}
\end{definition}

\begin{remark} Subclasses of CST networks have been previously considered in the literature \cite{joshi2015survey}. 
Notably, the class of open {\em sequestration networks} 
\begin{eqnarray}\label{net:seq}
X_1&\to& mX_2\\\nonumber
X_2+X_3&\to& 0\\\nonumber
&\vdots\\\nonumber
X_{n-1}+X_n&\to& 0\\\nonumber
X_n+X_1&\to& 0\\\nonumber
X_i&\rightleftharpoons& 0,\quad  1\le i\le n
\end{eqnarray}
($n\ge 2$, $m\ge 1$ integers) has been shown to be multistationary if and only if $m>1$ and $n$ is odd \cite[Theorem 6.4]{joshi2015survey}. Note that Theorems \ref{thm:CSToutflows} and \ref{thm:main} in this paper extend that result. Nondegeneracy of steady states in sequestration networks (\ref{net:seq}) has been shown for particular cases (including for network \eqref{net:seq} with $n=3$ in \cite{felix2016analyzing} and in full generality in \cite{tang2019bistability}. Furthermore, the latter work shows {\em bistability} (existence of multiple stable steady states) of sequestration networks with $m>1$ and $n$ odd. 
We note that nondegeneracy holds for classes of CST networks that are not necessarily sequestration networks; this is ongoing work \cite{DSRcyles}.
\end{remark}

\subsection{Closed CST network}
The stoichiometric matrix and the reactant matrix of a closed CST network can be written as 
\begin{equation}\label{eq:Gamma}
\Gamma=
\begin{bmatrix}
-a_1    &0 &\ldots &0 &-b_1\vert b_1\\
-b_2\vert b_2  &-a_2  &\ldots &0 &0\\
&\ddots &\ddots &\vdots &\vdots \\
&  &\ddots &-a_{n-1} &0 &\\
&  & &-b_n\vert b_n &-a_n\\
\end{bmatrix}
\text{ and }
\Gamma_l=
\begin{bmatrix}
a_1    &0 &\ldots &0 &b_1\vert 0\\
b_2\vert 0  &a_2  &\ldots &0 &0\\
&\ddots &\ddots &\vdots &\vdots \\
&  &\ddots &a_{n-1} &0 &\\
&  & &b_n\vert 0 &a_n\\
\end{bmatrix}
\end{equation}

The convention here is that the entry $-b_i\vert b_i$ in $\Gamma$ in is equal to $-b_i$ if $R_{i-1}$ is a sequestration reaction, and is equal to $b_i$ if $R_{i-1}$ is a transmutation reaction ($R_0$ is reaction $R_n$).

In the same way the entry $b_i\vert 0$ in $\Gamma_l$ is equal to $b_i$ if $R_{i-1}$ is a sequestration and to 0 if $R_{i-1}$ is a transmutation.

Under mass-action kinetics the rate of reaction $R_i$ is either
$$v_i(x)=k_ix_i^{a_i}x_{i+1}^{b_{i+1}} \text{\quad  or\quad }
v_i(x)=k_ix_i^{a_i}$$ 
depending on whether $R_i$ is a sequestration or transmutation reaction,  respectively.

The Jacobian matrix $Dv(x)$ of $v(x)$ has only nonnegative entries for general kinetics. Furthermore, an entry of $Dv(x)$ is positive if and only if the corresponding entry of $\Gamma_l$ is positive. In other words, $Dv(x)$ belongs to ${\cal Q}(\Gamma_l)$.

\begin{remark}
Unless we specify otherwise, we reserve $\Gamma, \Gamma_l$, and $v$ for denoting the stoichiometric matrix, reactant matrix and the rate vector of a closed CST. For open CSTs the stoichiometric and reactant matrices and the rate vector will be denoted by $\overline\Gamma$, $\overline\Gamma_l$, and $\overline v$ respectively.
\end{remark}

\subsection{Minors of $\Gamma$ and $\Gamma_l$ for closed CSTs}

Both $\Gamma$ and $\Gamma_l$ corresponding to closed CSTs belong to the class ${\cal M}\subset {\mathbb R}^{n \times n}$ of matrices of the form

\begin{equation}\label{eq:Mclass}
\left[
\begin{matrix}
m_{1,1}    &0       &\ldots &0        &m_{1,n}\\
m_{2,1}  &m_{2,2}     &\ldots &0        &0\\
0       &m_{3,2}  &\ldots &0        &0\\
\vdots       &\vdots       &\ddots &\vdots        &\vdots\\
0       &0       &\ldots &m_{n-1,n-1} &0\\
0       &0       &\ldots &m_{n,n-1}   &m_{n,n}\\
\end{matrix}
\right]
\end{equation}

\begin{lemma}\label{lem:CSTminors}
Let $M\in\cal M$. Any minor of $M$ of size less than $n$ is a monomial in $m_{i,j}$. The determinant of $M$ is equal to $\prod_{l=1}^n m_{l,l}+(-1)^{n+1}\prod_{l=1}^n m_{l,l-1}$, where by convention $m_{1,0}=m_{1,n}$.
\end{lemma}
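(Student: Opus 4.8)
The plan is to analyze the combinatorial structure of minors of a matrix $M\in\cal M$ directly from the sparsity pattern in \eqref{eq:Mclass}. The matrix has nonzero entries only on the diagonal, on the subdiagonal (entries $m_{i,i-1}$ for $i=2,\ldots,n$), and in the single corner position $m_{1,n}$. Equivalently, the bipartite graph on rows and columns whose edges are the nonzero positions is a single cycle of length $2n$: row $1$ — col $1$ — row $2$ — col $2$ — $\cdots$ — row $n$ — col $n$ — row $1$. The key fact I would exploit is that for any proper subset of this cycle (which is what a minor of size $k<n$ picks out, after deleting some rows and columns), the induced bipartite graph is a disjoint union of paths, hence has at most one perfect matching; so the permanent-style expansion of the minor collapses to a single term, i.e.\ a monomial (possibly zero, which we still regard as a monomial or simply note the minor vanishes unless that unique matching exists).

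First I would set up notation: given $\alpha,\beta\subseteq\{1,\ldots,n\}$ with $|\alpha|=|\beta|=k$, write the minor $M[\alpha|\beta]$ via the Leibniz formula, $M[\alpha|\beta]=\sum_{\sigma}\operatorname{sgn}(\sigma)\prod_{i\in\alpha} m_{i,\sigma(i)}$, summed over bijections $\sigma:\alpha\to\beta$. A term is nonzero only if for every $i\in\alpha$ the position $(i,\sigma(i))$ is one of the allowed nonzero positions, i.e.\ $\sigma(i)\in\{i,i-1\}$ (indices mod $n$, so $i=1$ allows $\sigma(1)\in\{1,n\}$). The second step is the combinatorial heart: I claim that if $k<n$ there is at most one such bijection. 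To see this, observe that the ``allowed'' relation is exactly the $2n$-cycle above, and choosing a row subset $\alpha$ and column subset $\beta$ of size $k<n$ means we have deleted at least one row and at least one column, hence cut the cycle into one or more paths; in a path graph a perfect matching, if it exists, is forced (start from a degree-one endpoint and propagate). Since the disjoint pieces are independent, the whole matching $\sigma$ is unique when it exists. Therefore $M[\alpha|\beta]$ is either $0$ or $\pm$ a single product of entries — a monomial.

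For the determinant ($k=n$), nothing is deleted, so the bipartite graph is the full $2n$-cycle, which has exactly two perfect matchings: $\sigma=\mathrm{id}$ (every $i\mapsto i$), contributing $\prod_{l=1}^n m_{l,l}$ with sign $+1$; and $\sigma$ the ``shift'' $i\mapsto i-1$ (cyclically), contributing $\prod_{l=1}^n m_{l,l-1}$ with sign equal to $\operatorname{sgn}$ of an $n$-cycle, which is $(-1)^{n-1}=(-1)^{n+1}$. Summing gives $\det M=\prod_{l=1}^n m_{l,l}+(-1)^{n+1}\prod_{l=1}^n m_{l,l-1}$, with the stated convention $m_{1,0}=m_{1,n}$. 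I would either invoke the cycle-matching picture or, equivalently, just do a cofactor expansion along the first row: expanding on $m_{1,1}$ leaves a lower-bidiagonal $(n-1)\times(n-1)$ block whose determinant is $\prod_{l=2}^n m_{l,l}$, and expanding on $m_{1,n}$ leaves a block that is upper triangular after a cyclic relabeling, with determinant $\prod_{l=2}^n m_{l,l-1}$ up to the sign $(-1)^{1+n}$ from the cofactor position; combining reproduces the formula.

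I do not expect a serious obstacle here; the only thing requiring care is the bookkeeping of which positions are ``allowed'' when $i=1$ wraps around to column $n$, and making the ``at most one perfect matching in a union of paths'' argument rigorous rather than hand-wavy — essentially an induction on the number of vertices of a path, peeling off a pendant edge. A minor annoyance is deciding whether a vanishing minor counts as ``a monomial''; I would phrase the conclusion as ``every minor of size $<n$ is, up to sign, a product of some subset of the $m_{i,j}$ (in particular it is $0$ or a monomial),'' which is all that later proofs need. The rest is routine.
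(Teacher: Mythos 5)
Your proof is correct, and it takes a genuinely different route from the paper's. The paper works directly with the Leibniz expansion in equation (\ref{eq:minor}) and shows by explicit index bookkeeping (chains of inequalities $j_{\sigma(1)}\le i_1\le j_{\sigma(2)}\le\cdots$ on the ordered elements of $\alpha$ and $\beta$, with a separate analysis of the wrap-around case $i_1=1$, $j_{\sigma(1)}=n$) that at most one permutation contributes when $|\alpha|<n$, and that two can contribute only when $\alpha=\beta=\{1,\ldots,n\}$, which yields the determinant formula within the same computation. You instead encode the support of $M$ as a $2n$-cycle in the row--column bipartite graph and reduce the uniqueness claim to the standard fact that a disjoint union of paths has at most one perfect matching (forced by propagation from degree-one vertices), deleting at least one row and one column being what breaks the cycle; the determinant then comes from the two perfect matchings of the full cycle, or alternatively from your cofactor expansion along the first row, which the paper does not use. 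The graph-theoretic framing is more conceptual, isolates exactly what is used (the sparsity pattern is a single cycle), and generalizes immediately to any matrix whose bipartite support graph is a cycle or a forest, at the cost of having to make the ``unique matching in a union of paths'' induction rigorous; the paper's argument is more elementary and self-contained but is essentially an ad hoc verification of the same forcing phenomenon. Your handling of the two small subtleties (the wrap-around at $i=1$, and the convention that a vanishing minor still counts as a monomial) matches what the paper needs, and your sign computation $(-1)^{n-1}=(-1)^{n+1}$ for the cyclic shift agrees with the stated formula.
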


\begin{proof}
Let $1\le i_1<\ldots<i_k\le n$, $1\le j_1<\ldots<j_k\le n$ and let $\alpha=\{i_1,\ldots,i_k\}$, $\beta=\{j_1,\ldots,j_k\}$.
Then

\begin{equation}\label{eq:minor}
M[\alpha\vert \beta]=\sum_{\sigma\in S_k} \epsilon(\sigma)\prod_{l=1}^k m_{i_l,j_{\sigma(l)}}
\end{equation}

For any $\sigma\in S_k$ that produces a non-zero term in (\ref{eq:minor}) we must have 
$j_{\sigma(l)}\in\{i_l-1,i_l\}$
where the index $0$ means $n$ by convention. 
This convention does not affect the case  $i_1>1$, where we have $j_{\sigma(1)}\le i_{1}\le j_{\sigma(2)}\le\ldots\le j_{\sigma(k)}\le i_{k}$. In this case 
$j_{\sigma (1)}<\ldots<j_{\sigma (k)}$,  $j_{\sigma(l)}=
j_l$ for all $l\in\{1,\ldots, k\}$, and therefore there is only one permutation that produces a nonzero term in  $M[\alpha\vert \beta]$ which is a monomial in $m_{i,j}$.

If $i_1=1$, then either $j_{\sigma(1)}  =1$ or $j_{\sigma(1)}=n$. If $j_{\sigma(1)}=1$ the argument above yields the permutation $\sigma_1(l)=l$ for all $l$. If $j_{\sigma(1)}=n$ then $n\in\alpha$ and therefore $i_k=n$. We have 
$$1=i_{1}\le i_{2}-1\le j_{\sigma(2)}\le i_2\le i_3-1\le j_{\sigma(3)}\ldots\le 
i_{k-1}\le i_{k}-1\le 
j_{\sigma(k)}\le i_{k}\le j_{\sigma(1)}=n$$ 
and we arrive at the permutation $\sigma_2(2)=1,\ \sigma_2(3)=2, \ldots \sigma_2(k)=k-1$, and $\sigma_2(1)=k$. Therefore there are at most two permutations $\sigma_1$ and $\sigma_2$ that produce nonzero terms, and note that $\sigma_1(l)\neq \sigma_2(l)$ for all $l$. Since $j_{\sigma(l)}\in\{i_l-1, i_l\}$, it follows that 
$i_l-1\in\alpha$ for all $l$. Therefore  $n-1\in\alpha, n-2\in\alpha,\ldots,1\in\alpha$. We have $\alpha=\beta=\{1,\ldots, n\}$ and in this case
$M[\alpha\vert \beta]=\det(M).$
\end{proof}

\begin{lemma}\label{lem:correspProd}
Let $\Gamma$ and $\Gamma_l$ denote the stoichiometric and source matrices of a closed CST network, and let $\alpha,\beta\subseteq \{1,\ldots, n\}$ be  such that $\vert \alpha\vert =\vert \beta\vert <n$. Then for any $M\in{\mathcal Q}(\Gamma_l)$ we have
$$(-1)^{\vert \alpha\vert }\Gamma[\alpha\vert \beta]M[\alpha\vert \beta]\geq 0.$$
\end{lemma}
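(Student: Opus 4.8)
The plan is to use Lemma~\ref{lem:CSTminors} to reduce the claim to a sign bookkeeping over monomials. Since $|\alpha|=|\beta|<n$, Lemma~\ref{lem:CSTminors} tells us that each of $\Gamma[\alpha|\beta]$ and $M[\alpha|\beta]$ is a (signed) monomial in the respective matrix entries, coming from a \emph{single} permutation $\sigma\in S_k$ — indeed, both matrices lie in $\mathcal M$, and for a fixed index pattern $(\alpha,\beta)$ of size $<n$ the surviving permutation $\sigma$ (when it exists) is the same for both, because which permutation survives depends only on the zero pattern, which $\Gamma$, $\Gamma_l$ and every $M\in\mathcal Q(\Gamma_l)$ share (note $\Gamma$ has zeros in exactly the positions where $\Gamma_l$ does — compare the two displays in \eqref{eq:Gamma}). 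So if $\Gamma[\alpha|\beta]=0$ the inequality is trivial; otherwise both minors equal $\epsilon(\sigma)$ times a product of $k=|\alpha|$ matrix entries $m_{i_l,j_{\sigma(l)}}$ over the \emph{same} set of positions $(i_l,j_{\sigma(l)})$.

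Next I would observe that the product $(-1)^{|\alpha|}\Gamma[\alpha|\beta]M[\alpha|\beta]$ then equals $(-1)^k \epsilon(\sigma)^2 \prod_{l=1}^k \big(\Gamma_{i_l,j_{\sigma(l)}} \cdot M_{i_l,j_{\sigma(l)}}\big)$, so the $\epsilon(\sigma)$ signs cancel and it suffices to show that each factor $(-1)\cdot \Gamma_{i,j} M_{i,j}\ge 0$ for every position $(i,j)$ that can occur in such a surviving monomial — equivalently, $\Gamma_{i,j}$ and $M_{i,j}$ have opposite signs (or one is zero) at every nonzero position of $\Gamma_l$. This is where the structure of CST networks enters. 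Reading off \eqref{eq:Gamma}: on the diagonal, $\Gamma_{i,i}=-a_i<0$ while $(\Gamma_l)_{i,i}=a_i>0$, so $\Gamma_{i,i}$ and $M_{i,i}$ have opposite signs (any $M\in\mathcal Q(\Gamma_l)$ has $M_{i,i}>0$). On the subdiagonal/corner positions $(i,i-1)$ (with $(1,0)$ meaning $(1,n)$): if $R_{i-1}$ is a sequestration reaction then $\Gamma_{i,i-1}=-b_i<0$ and $(\Gamma_l)_{i,i-1}=b_i>0$, again opposite signs; if $R_{i-1}$ is a transmutation reaction then $(\Gamma_l)_{i,i-1}=0$, so $M_{i,i-1}=0$ and the factor vanishes. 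In all remaining positions $(\Gamma_l)_{i,j}=0$, so again $M_{i,j}=0$. Hence every factor is $\le 0$, and there are exactly $k=|\alpha|$ of them, giving $(-1)^{|\alpha|}\Gamma[\alpha|\beta]M[\alpha|\beta]\ge 0$.

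The one point requiring care — and the main obstacle — is justifying that the \emph{same} permutation $\sigma$ governs both minors, i.e. that a position $(i_l,j_{\sigma(l)})$ appearing in the nonzero monomial for $\Gamma[\alpha|\beta]$ is one where $(\Gamma_l)_{i_l,j_{\sigma(l)}}$ need not be nonzero. But this is not actually an obstacle: if $\Gamma[\alpha|\beta]\ne 0$, its monomial uses only positions where $\Gamma_{i_l,j_{\sigma(l)}}\ne 0$; at each such position either $(\Gamma_l)_{i_l,j_{\sigma(l)}}>0$ (diagonal, or subdiagonal coming from a sequestration) giving a strictly negative factor, or $(\Gamma_l)_{i_l,j_{\sigma(l)}}=0$ (subdiagonal from a transmutation) giving $M_{i_l,j_{\sigma(l)}}=0$ and hence the whole product — and the whole expression — is $0\ge 0$. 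Either way the inequality holds. I would present the argument in exactly this order: invoke Lemma~\ref{lem:CSTminors} to get single-monomial minors, cancel the permutation signs, and then verify the position-by-position sign rule from \eqref{eq:Gamma}, splitting on sequestration versus transmutation for the off-diagonal entries.
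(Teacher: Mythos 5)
Your proof is correct and follows essentially the same route as the paper's: invoke Lemma \ref{lem:CSTminors} to reduce both minors to single monomials governed by the same permutation, cancel $\epsilon(\sigma)^2$, and verify the signs entry by entry from \eqref{eq:Gamma}, with the transmutation subdiagonal positions forcing $M[\alpha\vert\beta]=0$ and hence a trivial inequality. One small inaccuracy: the parenthetical claim that $\Gamma$ has zeros exactly where $\Gamma_l$ does is false whenever a transmutation reaction is present ($\Gamma$ has $+b_i$ at a position where $\Gamma_l$ has $0$); the correct justification is simply that $\Gamma$, $\Gamma_l$ and every $M\in\mathcal{Q}(\Gamma_l)$ all have the banded form \eqref{eq:Mclass}, so the unique candidate permutation is determined by $(\alpha,\beta)$ alone --- and since your final paragraph already treats the positions where $\Gamma_l$ vanishes, the argument stands as written.
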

\begin{proof}
Suppose $\Gamma[\alpha\vert \beta]M[\alpha\vert \beta]\neq 0$. By Lemma \ref{lem:CSTminors} the minors $
\Gamma[\alpha\vert \beta]$ and $M[\alpha\vert \beta]$ each contain one term, corresponding to the same permutation $\sigma$ in (\ref{eq:minor}). The sign of $M[\alpha\vert \beta]$ is equal to $\epsilon(\sigma)$. If the monomial $\Gamma_l[\alpha\vert \beta]$ contains $b_i$ as a factor, then $R_i$ is a sequestration, and the entry $-b_i\vert b_i$ in $\Gamma$ is equal to $-b_i$. Then $\Gamma[\alpha\vert \beta]=\epsilon(\sigma)\prod_{i\in I}(-a_i)\prod_{j\in J}(-b_j)=\epsilon(\sigma)(-1)^{\vert \alpha\vert }\prod_{i\in I}a_i\prod_{j\in J} b_j$, where $I, J$ are disjoint subsets of $\{1,\ldots, n\}$ with $\vert I\cup J\vert =\vert \alpha\vert $. It follows that sign($\Gamma[\alpha\vert \beta]M[\alpha\vert \beta])=(-1)^{\vert \alpha\vert }\epsilon(\sigma)^2$ and the conclusion follows.
\end{proof}

\section{Injectivity of CSTs}

\subsection{Closed CST networks}

We show that closed CST networks with mass-action are always injective. In fact, even under general kinetics, with the  exception of one case,  closed CST networks are injective. 
For the exception, which occurs when the number of species is even and transmutation reactions are absent, we can find a non-mass action kinetics that makes the closed CST network fail injectivity \cite{banaji2016some}.  

\begin{theorem}\label{thm:CSTinj}
\noindent 1. A closed CST network is not injective under general kinetics if and only if $s=n$, $n$ is even and $\prod_{i=1}^n a_i\neq\prod_{i=1}^n b_i$. 
% {\cre (This is confusing. Doesn't the next statement contradict this? Do you mean `not necessarily injective'? In this case, it's not an `if and only if' statement.)}

\noindent 2. Any closed CST network is injective under mass-action.

\end{theorem}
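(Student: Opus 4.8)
## Proof proposal

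The plan is to reduce both statements to the injectivity criterion of Theorem~\ref{thm:inj} together with the minor computations already assembled in Lemmas~\ref{lem:CSTminors} and~\ref{lem:correspProd}. Note first that $\Gamma$ has rank either $n$ or $n-1$: by Lemma~\ref{lem:CSTminors}, $\det\Gamma = \prod_i(-a_i)\,[\text{or }\pm\prod_i(\pm b_i)\text{-type terms}]$, and more precisely $\det\Gamma$ equals a signed combination of $\prod_{l} m_{l,l}$ and $\prod_l m_{l,l-1}$; since the diagonal of $\Gamma$ is $(-a_1,\dots,-a_n)$ with all $a_i>0$, the diagonal product is nonzero, so $r=\operatorname{rank}\Gamma \in\{n-1,n\}$, with $r=n-1$ possible only in the degenerate-determinant case. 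I would organize the proof by the value of $r$.

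\textbf{Case $r<n$, i.e. $r=n-1$.} Here the relevant minors in Theorem~\ref{thm:inj} have size $r=n-1<n$, so Lemma~\ref{lem:correspProd} applies directly: for every $A\in\mathcal Q(\Gamma_l)$ and every $\alpha,\beta$ of size $n-1$, the product $(-1)^{n-1}\Gamma[\alpha\vert\beta]A[\alpha\vert\beta]\ge 0$, so all these products share the sign $(-1)^{n-1}$ (or vanish). To finish I must exhibit at least one nonzero such product. Choosing $\alpha=\beta=\{1,\dots,n-1\}$ gives the principal minor $\Gamma[\alpha\vert\beta]=\prod_{i=1}^{n-1}(-a_i)\neq 0$, and the corresponding minor of $\Gamma_l$ is $\prod_{i=1}^{n-1}a_i\neq 0$, hence also $A[\alpha\vert\beta]\neq 0$ for the specific choice $A=\Gamma_l\in\mathcal Q(\Gamma_l)$. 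Thus the general-kinetics condition of Theorem~\ref{thm:inj}(2) holds, and the network is injective under general kinetics — in particular under mass action. This disposes of part~2 and part~1 whenever $r<n$ (consistent with part~1, since $r<n$ forces $\det\Gamma=0$, i.e. we are not in the claimed non-injective regime).

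\textbf{Case $r=n$.} Now the only minors that matter are the full determinants $\Gamma[\,\cdot\,]=\det\Gamma$ and $A[\,\cdot\,]=\det A$ for $A\in\mathcal Q(\Gamma_l)$, with $\alpha=\beta=\{1,\dots,n\}$; injectivity under general kinetics holds iff $(-1)^n\det\Gamma\cdot\det A$ has a constant sign (and is sometimes nonzero) as $A$ ranges over $\mathcal Q(\Gamma_l)$. By Lemma~\ref{lem:CSTminors}, $\det\Gamma = \prod(-a_i) + (-1)^{n+1}\prod(\pm b_i)$ where the sign inside the second product records which reactions are transmutations; when at least one reaction $R_i$ is a transmutation, the two monomials $\prod a_i$ and $\prod b_i$ enter $(-1)^n\det\Gamma$ with the \emph{same} sign, and likewise $(-1)^n\det A$ has a fixed sign for all $A\in\mathcal Q(\Gamma_l)$ because the corresponding entry of $\Gamma_l$ is $0$ there, killing the competing monomial — so the sign condition holds and the network is injective even under general kinetics. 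The same ``same sign'' phenomenon occurs, regardless of transmutations, when $n$ is odd. The remaining sub-case is $n$ even and no transmutations ($s=n$ in the paper's notation for ``all sequestrations''): then $\det\Gamma=\prod a_i - \prod b_i$, and by suitable choice of $A\in\mathcal Q(\Gamma_l)$ (freely scaling the nonzero entries) one can make $\det A=\prod a_i' - \prod b_i'$ take either sign unless $\prod a_i=\prod b_i$ forces $\det\Gamma=0$ simultaneously with a sign obstruction — this is exactly the failure of the constant-sign requirement, giving non-injectivity under general kinetics precisely when $\prod a_i\neq\prod b_i$ (and injectivity, via a degenerate-but-nonzero argument or direct check, when $\prod a_i=\prod b_i$). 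For part~2 in this sub-case, under mass action Theorem~\ref{thm:inj}(1) only tests $A=\Gamma_l$ itself: then $(-1)^n\det\Gamma\cdot\det\Gamma_l = (\prod a_i - \prod b_i)(\prod a_i + \prod b_i) = (\prod a_i)^2-(\prod b_i)^2$, a single number, so the constant-sign condition is vacuous and one only needs it to be nonzero in at least one instance — and indeed the size-$(n-1)$ principal minors already used above are nonzero, so mass-action injectivity always holds.

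\textbf{Main obstacle.} The routine parts are the minor bookkeeping (handled by the two lemmas) and the sign chase in $\det\Gamma$. The genuinely delicate point is the boundary sub-case $n$ even, all sequestrations, $\prod a_i=\prod b_i$: here $\det\Gamma=0$ so $r<n$ would normally be expected, and I must make sure the rank analysis and the ``at least one nonzero product'' clause are handled consistently — i.e. confirm that this case actually falls under $r=n-1$ and is therefore injective under general kinetics, so that the ``only if'' in part~1 is sharp. I expect this reconciliation, plus the explicit construction of a general kinetics $A\in\mathcal Q(\Gamma_l)$ witnessing sign reversal when $\prod a_i\neq\prod b_i$, to be where the argument needs the most care.
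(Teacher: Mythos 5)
Your proposal is essentially the paper's own proof: invoke Theorem \ref{thm:inj}, split on $\operatorname{rank}\Gamma\in\{n-1,n\}$, use Lemma \ref{lem:correspProd} together with the nonvanishing principal $(n-1)$-minor in the rank-deficient case, observe that $\det A>0$ for every $A\in\mathcal{Q}(\Gamma_l)$ unless $s=n$ with $n$ even (where $\det A$ can change sign, yielding non-injectivity exactly when $\prod a_i\neq\prod b_i$), and settle mass action in that exceptional case via the single full-size product $\det\Gamma\,\det\Gamma_l$. Two small slips, neither fatal: for $n$ even and $s=n$ one has $\det\Gamma_l=\prod a_i-\prod b_i$, not $\prod a_i+\prod b_i$, so the unique product is $(\prod a_i-\prod b_i)^2>0$; and since $r=n$ there, the ``at least one nonzero product'' clause of Theorem \ref{thm:inj}(1) must be witnessed by that full-size product itself --- the size-$(n-1)$ principal minors you cite are not admissible at rank $n$, though the full-size product is nonzero anyway because $\prod a_i\neq\prod b_i$. (Likewise, the aside that the two monomials enter $(-1)^n\det\Gamma$ with the same sign whenever a transmutation is present fails for even $s$ with $0<s<n$, but it is not needed, since $\det A>0$ already carries that case.)
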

\begin{proof}

1. We apply Theorem \ref{thm:inj} part 1 distinguishing  the cases  $\mbox{rank}\ \Gamma=n$ and $\mbox{rank}\ \Gamma\neq n$. Note that the second case is equivalent to $\mbox{rank}\ \Gamma= n-1$ since the minor $\Gamma[\{1,\ldots, n-1\}\vert \{1,\ldots, n-1\}]$ equals $(-1)^{n-1}\prod_{i=1}^{n-1}a_i\neq 0$. Let $M=Dv(x)\in\mathcal{Q}(\Gamma_l)$; $M$ is of the form (\ref{eq:Mclass}) with non-negative entries, and strictly positive diagonal entries. 

Suppose $\mbox{rank } \Gamma=n$. We have
$$\det\Gamma=(-1)^n\prod a_i+(-1)^{n+s+1}\prod b_i\neq 0,$$
i.e. $s$ is odd, or $\prod_{i=1}^n a_i\neq \prod_{i=1}^n b_i$. In this case injectivity is equivalent to $\det M\neq 0$. If $s<n$ then $\det M$ is the product of its diagonal entries, and is therefore strictly positive. If $s=n$ then
$$\det M = \prod_{i=1}^nm_{i,i}+(-1)^{n+1}\prod_{i=1}^n m_{i,i-1}$$ by Lemma \ref{lem:CSTminors}
and we have injectivity if and only if this expression is nonzero for any choice of positive $m_{i,j}$, i.e. if and only if $n$ is odd.
Therefore if $\mbox{rank } \Gamma=n$ the network is injective in all cases except when $s=n$ and $n$ is even. Note that in this case $\mbox{rank } \Gamma=n$ is equivalent to $\prod_{i=1}^n a_i\neq\prod_{i=1}^n b_i$.

In the remaining case where $\mbox{rank }\Gamma=n-1$ (i.e. $s$ is even and  $\prod a_i=\prod b_i$), Lemma  \ref{lem:correspProd} implies that if $\vert \alpha\vert =\vert \beta\vert =n-1$ then the sign of $\Gamma[\alpha\vert \beta]M[\alpha\vert \beta]$ is equal to $(-1)^{n-1}$. We also note that if $\alpha=\beta=\{1,\ldots, n-1\}$ this product is nonzero. This completes the verification of the hypothesis in Theorem \ref{thm:inj} part 2, and the network is injective.

2. We only need to discuss the case that fails injectivity under general kinetics, i.e. $s=n$ are even and $\prod_{i=1}^n a_i\neq \prod_{i=1}^n b_i$. In this case $\det \Gamma=\det\Gamma_l=\prod_{i=1}^n a_i-\prod_{i=1}^n b_i$ and so $\det\Gamma\det\Gamma_l>0$, which completes the proof using Theorem \ref{thm:inj} part 1.  
 \end{proof}

{\bf A note on stability of equilibrium points.} While stability and convergence to equilibria for CST networks is not the focus of this paper, we briefly note that monotone systems (\cite{Angeli.2010aa}; see also \cite{Craciun.2011ox}) and deficiency theory \cite{Horn.1972aa, Horn.1972ab, feinberg2019foundations, Craciun.2009aa, Pantea.2012ss, Craciun.2019df} are promising avenues for this type of question. In particular, the following  proposition on closed CST networks is easy to prove. Not to distract from the main focus of the paper, we will assume familiarity with the statement of Deficiency Zero Theorem and  related terminology; see \cite{feinberg2019foundations}.

\begin{proposition}
Consider a closed CST network denoted as in Definition \ref{def:CST} such that $s$ is odd or $\prod_{i=1}^n a_i\neq \prod_{i=1}^n b_i$ (i.e. $\Gamma$ has rank $n$), and let ${\cal R}$ be obtained from this closed CST network by making all reactions reversible. Then $\cal R$ equipped with mass-action has a unique positive equilibrium, which is locally asymptotically stable. 
\end{proposition}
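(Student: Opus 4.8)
The plan is to apply the Deficiency Zero Theorem to the reversibilized network $\mathcal{R}$. Recall that this theorem states that if a mass-action network is weakly reversible and has deficiency zero, then for every choice of positive rate constants it has exactly one equilibrium in each positive stoichiometric compatibility class, and that equilibrium is positive and locally asymptotically stable relative to that class. The extra ingredient here is the rank hypothesis: since $\Gamma$ has rank $n$, the stoichiometric subspace of $\mathcal{R}$---which coincides with that of the original closed CST, because reversing a reaction merely adjoins the negative of its reaction vector---is all of $\mathbb{R}^n$. Consequently there is a single positive stoichiometric compatibility class, namely $\mathbb{R}^n_{>0}$, and the Deficiency Zero Theorem then delivers precisely the conclusion of the proposition.

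So it remains to verify the two hypotheses of the theorem. Weak reversibility is immediate: once every reaction is made reversible, each arc of the directed reaction graph occurs together with its reverse, so every linkage class is strongly connected. For deficiency zero, recall $\delta = n_c - \ell - \dim S$, where $n_c$ is the number of complexes and $\ell$ the number of linkage classes; note $n_c$ and $\ell$ are unchanged by reversibilization, and $\dim S = n$ by the previous paragraph. Since $\delta \ge 0$ for every reaction network, it suffices to show $n_c - \ell \le n$. This follows from the graph-theoretic identity $\ell = n_c - E + c$, where $E$ is the number of edges of the undirected reaction graph and $c \ge 0$ is its cycle rank: indeed $n_c - \ell = E - c \le E \le n$, because the reaction graph has at most one edge for each of the reactions $R_1,\dots,R_n$. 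Hence $\delta = 0$.

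Having checked both hypotheses, the Deficiency Zero Theorem applies to $\mathcal{R}$ with mass-action kinetics and, as noted above, yields a unique positive equilibrium on $\mathbb{R}^n_{>0}$ that is locally asymptotically stable. The proof is short; the only subtle point---and the one I expect to require the most care---is recognizing that the rank-$n$ hypothesis is exactly what is needed: it forces $\dim S = n$ and, through $\delta \ge 0$, forces the reaction graph to be a forest, thereby excluding the degenerate configurations (no sequestration reactions with all $a_i = b_i$, or two coinciding reactions) in which the deficiency would be positive. One could instead confirm $\delta = 0$ by directly enumerating complexes and linkage classes and tracking the only possible coincidences---the zero complex is shared by all sequestration reactions, and the single-species complexes of two transmutation reactions coincide only when the reactions are cyclically adjacent with matching stoichiometric coefficients---but the counting argument above sidesteps that bookkeeping.
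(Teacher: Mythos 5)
Your proof is correct, and it reaches the deficiency-zero conclusion by a different route than the paper. Both arguments rest on the Deficiency Zero Theorem, and both use the rank-$n$ hypothesis to see that the stoichiometric subspace is all of $\mathbb{R}^n$, so that $\mathbb{R}^n_{>0}$ is the unique positive compatibility class. Where you differ is in verifying $\delta=0$: the paper enumerates the complex--linkage-class structure explicitly (the sequestrations form a single star-shaped linkage class around the zero complex, the transmutations form $k$ path-shaped linkage classes with $t+k$ complexes, and the rank-$n$ hypothesis is invoked to rule out a transmutation cycle), arriving at $(s+1+t+k)-(1+k)-n=0$. You instead avoid all bookkeeping of coinciding complexes by squeezing: the universal inequality $\delta\ge 0$ gives $n_c-\ell\ge n$, while the identity $n_c-\ell=E-c\le E\le n$ (at most one undirected edge per reaction $R_1,\dots,R_n$, cycle rank $c\ge 0$) gives the reverse inequality, forcing $\delta=0$ and, incidentally, that the complex graph is a forest with exactly $n$ edges. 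Your argument buys uniformity: it handles without comment the edge cases that the paper's count glosses over (e.g.\ $s=0$, where there is no zero complex and no sequestration linkage class, or $n=2$ with coinciding sequestration source complexes, which the rank hypothesis excludes), and it makes transparent that rank $n$ is exactly what forbids cycles and repeated edges. The paper's enumeration, in exchange, gives concrete structural information about the linkage classes (star plus paths) that your counting argument does not exhibit.
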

\begin{proof}
The claim follows from the Deficiency Zero Theorem \cite[Section 7.1]{feinberg2019foundations}. There is a linkage class which is a graph-theoretical star that contains all sequestrations and no other reactions. Transmutation reactions form $k$ linkage classes which are graph-theoretical paths (if one of these formed a cycle, then we would obtain linear dependencies between reaction vectors, and $\mathrm{rank}(\Gamma)<n$).  
Suppose there are $k$  linkage classes of transmutations. The number of complexes in transmutation reactions is $t+k$, and we compute the deficiency of the CST network as follows: $(s+1+t+k)-(1+k)-n=0$.
\end{proof}

\begin{remark}
If $\prod_{i=1}^n a_i=\prod_{i=1}^n b_i$ then the fully open CST (without reverse reactions) has a unique positive steady state, which is linearly stable. This follows since the fully open CST is {\em delay stable}, i.e. when modeled as a mass-action system with delays, the steady state is linearly stable for any choice of the delay parameters \cite[Example 5.10]{delayPolly}.
\end{remark}

\subsection{Open CSTs without outflows}
The proof of Theorem \ref{thm:CSTinj} carries over without additional effort if we add inflows for some arbitrary subset of species (but no outflows). Indeed, if the stoichiometric matrix and left stoichiometric matrix of the CST (without inflows) are denoted $\Gamma$ and $\Gamma_l$, then
$\overline\Gamma=[\Gamma\vert L]$ and $\overline\Gamma_l=[\Gamma_l\vert 0]$ are the stoichiometric and left stoichiometric matrices of the open CST with inflows (see Remark \ref{rem:block}). Then  $\mbox{rank } \overline\Gamma=n$, and the only  non-zero product $\overline\Gamma[\alpha\vert \beta]\overline\Gamma_l[\alpha\vert \beta]$ corresponds to $\alpha=\beta=\{1,\ldots, n\}$. It follows from  Theorem \ref{thm:inj} part 2 that the CST network with inflows is injective. We have the following

\begin{theorem}\label{thm:CSTinjopen}
\noindent  \noindent 1. An open  CST network without outflows is not injective under general kinetics if and only if $s=n$, $n$ is even and $\prod_{i=1}^n a_i\neq\prod_{i=1}^n b_i$.

\noindent 2. Any open CST network without outflows is injective under mass-action.

\noindent \end{theorem}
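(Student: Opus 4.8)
The plan is to run the matrices of the open network through the injectivity criterion of Theorem~\ref{thm:inj}, exploiting that the inflow columns of the reactant matrix are zero. Write $\Gamma,\Gamma_l$ for the matrices of the underlying closed CST. Since there are no outflows and at least one inflow, Remark~\ref{rem:block} gives $\overline\Gamma=[\,\Gamma\,\vert\,L\,]$ and $\overline\Gamma_l=[\,\Gamma_l\,\vert\,0\,]$, where the columns of $L$ are standard basis vectors.

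The first step is to pin down $r=\operatorname{rank}\overline\Gamma$. Because $\Gamma[\{1,\dots,n-1\}\,\vert\,\{1,\dots,n-1\}]=(-1)^{n-1}\prod_{i<n}a_i\neq 0$ and adjoining columns never lowers the rank, $r=n$ whenever $\operatorname{rank}\Gamma=n$. When $\operatorname{rank}\Gamma=n-1$ — equivalently $s$ even and $\prod_i a_i=\prod_i b_i$ — a spanning vector $w$ of the one-dimensional left kernel of $\Gamma$ satisfies $-a_iw_i\pm b_{i+1}w_{i+1}=0$ for each $i$, so $|w_{i+1}|=(a_i/b_{i+1})|w_i|$; thus $w$ has all coordinates nonzero, no $e_j$ lies in $\operatorname{im}\Gamma$, adjoining any inflow column raises the rank, and again $r=n$.

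With $r=n$ the only admissible row set in Theorem~\ref{thm:inj} is $\alpha=\{1,\dots,n\}$, and a size-$n$ column set $\beta$ either equals $\{1,\dots,n\}$ or meets the inflow columns, in which case the corresponding minor of every $A\in\mathcal{Q}(\overline\Gamma_l)$ has a zero column. So the criterion is governed entirely by the block $\beta=\{1,\dots,n\}$, where the product is $\det\Gamma\cdot\det M$ with $M\in\mathcal{Q}(\Gamma_l)$ — exactly the quantity analyzed in the $\operatorname{rank}\Gamma=n$ branch of the proof of Theorem~\ref{thm:CSTinj}. From there the argument copies that proof: by Lemma~\ref{lem:CSTminors}, $\det M=\prod_i m_{i,i}+(-1)^{n+1}\prod_i m_{i,i-1}$ has positive diagonal part and nonnegative second part (the latter forced to vanish unless $R_{i-1}$ is a sequestration), so $\det M>0$ always unless $s=n$ and $n$ is even, in which case it attains both signs; and $\det\Gamma=(-1)^n\prod_i a_i+(-1)^{n+s+1}\prod_i b_i$ is a fixed nonzero scalar exactly when $s$ is odd or $\prod_i a_i\neq\prod_i b_i$. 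This gives part~1. For part~2 one reruns the same block with $M=\Gamma_l$: in the single case excluded by part~1 one has $s=n$, $n$ even and $\det\Gamma=\det\Gamma_l=\prod_i a_i-\prod_i b_i\neq 0$, so $\det\Gamma\cdot\det\Gamma_l=(\prod_i a_i-\prod_i b_i)^2>0$ and Theorem~\ref{thm:inj} part~1 gives mass-action injectivity there, while in every other case it is inherited from general-kinetics injectivity.

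I expect the delicate point to be the rank accounting in the first two steps, in particular the sub-case $\operatorname{rank}\Gamma=n-1$: here adjoining inflows pushes $\operatorname{rank}\overline\Gamma$ up to $n$, so the $(n-1)\times(n-1)$ minor shortcut that settled the closed network in Theorem~\ref{thm:CSTinj} is no longer available and the conclusion has to be drawn from the $n\times n$ products directly. Everything else is the determinant and monomial arithmetic already isolated in Lemmas~\ref{lem:CSTminors} and \ref{lem:correspProd} and carried over essentially verbatim from the proof of Theorem~\ref{thm:CSTinj}.
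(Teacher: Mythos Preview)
Your overall approach matches the paper's: exploit the zero inflow columns of $\overline\Gamma_l$ to reduce the minor test of Theorem~\ref{thm:inj} to the single block $\alpha=\beta=\{1,\dots,n\}$, then invoke the rank-$n$ analysis from Theorem~\ref{thm:CSTinj}. Your write-up is more explicit than the paper's one-paragraph sketch, notably in arguing that $\operatorname{rank}\overline\Gamma=n$ even when $\operatorname{rank}\Gamma=n-1$.

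But precisely the sub-case you flag as ``delicate'' is not actually closed. When $s$ is even and $\prod_i a_i=\prod_i b_i$ you have $\det\Gamma=0$, while your own left-kernel argument forces $\operatorname{rank}\overline\Gamma=n$; then the \emph{only} size-$n$ product available in Theorem~\ref{thm:inj} is $\det\Gamma\cdot\det M=0$, so the ``at least one nonzero product'' clause fails and the criterion does \emph{not} deliver injectivity, contrary to what part~1 asserts. You write that ``the conclusion has to be drawn from the $n\times n$ products directly,'' but those products are identically zero here, so nothing can be drawn from them. A two-species check makes this concrete: for $X_1\to X_2$, $X_2\to X_1$, $0\to X_1$ under mass action one has $f(x)=(-k_1x_1+k_2x_2+c,\;k_1x_1-k_2x_2)$, whose image lies in the line $y_1+y_2=c$, so $f$ is not injective on $\mathbb R^2_{>0}$. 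The paper's own proof is equally silent at this point and carries the same gap; the boundary case is harmless for the downstream multistationarity conclusions (in the example above there is no positive steady state once $c>0$), but the injectivity assertion itself is not established there by either argument.
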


\subsection{Open CSTs with outflows}

\begin{theorem}\label{thm:CSToutflows}
An open CST with $s$ sequestration reactions which contains at least one outflow reaction is injective under mass action if and only if $n=s$, or $s$ is odd, or $\prod_{i=1}^n a_i\ge \prod_{i=1}^n b_i$.
\end{theorem}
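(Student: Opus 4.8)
The plan is to apply Theorem~\ref{thm:inj}(1), following the strategy already used for Theorems~\ref{thm:CSTinj} and~\ref{thm:CSTinjopen} but now keeping track of the extra outflow columns. First I would show that the presence of at least one outflow forces $r:=\mathrm{rank}\,\overline\Gamma=n$: if the outflow is of species $X_j$, expanding along its column $-e_j$ shows that the $n\times n$ submatrix of $\overline\Gamma$ with all rows and with columns given by the CST reactions $\{R_i:i\neq j\}$ together with $X_j\to 0$ has determinant $\pm\,\Gamma[\{1,\ldots,n\}\setminus\{j\}\,|\,\{1,\ldots,n\}\setminus\{j\}]=\pm(-1)^{n-1}\prod_{i\neq j}a_i\neq 0$ (the minor is triangular after reordering, as in the proof of Theorem~\ref{thm:CSTinj}). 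Consequently, in Theorem~\ref{thm:inj}(1) the only row set is $\alpha=\{1,\ldots,n\}$, and it remains to control the sign of $\overline\Gamma[\{1,\ldots,n\}\,|\,\beta]\,\overline\Gamma_l[\{1,\ldots,n\}\,|\,\beta]$ over all $n$-element column sets $\beta$.

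Next I would sort the column sets. Any $\beta$ that uses an inflow column yields a zero column in $\overline\Gamma_l$, hence a vanishing product, so these can be ignored; likewise a $\beta$ using two outflow columns of the same species (allowed since repeated reactions are not excluded here) gives repeated columns and a zero product. Every remaining $\beta$ splits as a set $T\subseteq\{1,\ldots,n\}$ of CST-reaction columns together with the outflow columns $\{-e_j:j\in P\}$ for some set $P$ of outflow species, with $|T|+|P|=n$. Laplace-expanding both minors along the $|P|$ flow columns --- each of which has a single nonzero entry, equal to $-1$ in $\overline\Gamma$ and to $+1$ in $\overline\Gamma_l$ --- produces the \emph{same} expansion sign and the complementary minors $\Gamma[\{1,\ldots,n\}\setminus P\,|\,T]$ and $\Gamma_l[\{1,\ldots,n\}\setminus P\,|\,T]$, so that
$$\overline\Gamma[\{1,\ldots,n\}\,|\,\beta]\;\overline\Gamma_l[\{1,\ldots,n\}\,|\,\beta]=(-1)^{|P|}\,\Gamma[\{1,\ldots,n\}\setminus P\,|\,T]\;\Gamma_l[\{1,\ldots,n\}\setminus P\,|\,T].$$
For $|P|\geq 1$ these complementary minors have size $n-|P|<n$, so Lemma~\ref{lem:correspProd} (with $M=\Gamma_l\in\mathcal Q(\Gamma_l)$) gives $(-1)^{\,n-|P|}\Gamma[\,\cdot\,]\Gamma_l[\,\cdot\,]\geq 0$; multiplying by $(-1)^{|P|}$ shows every such product has sign $(-1)^n$ or is $0$, and the choice $P=\{j\}$, $T=\{1,\ldots,n\}\setminus\{j\}$ makes it nonzero (it equals $(-1)^n(\prod_{i\neq j}a_i)^2$).

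It then only remains to check the product for $\beta=\{1,\ldots,n\}$ (the case $|P|=0$), namely $\det\Gamma\cdot\det\Gamma_l$: the network is injective under mass action iff this product is $0$ or has sign $(-1)^n$. Using Lemma~\ref{lem:CSTminors} one has $\det\Gamma=(-1)^n\prod a_i+(-1)^{n+s+1}\prod b_i$, while $\det\Gamma_l=\prod a_i$ when $s<n$ and $\det\Gamma_l=\prod a_i+(-1)^{n+1}\prod b_i$ when $s=n$. A short case analysis on the parities of $s$ and $n$ now finishes the argument: when $s=n$ the product is $(\prod a_i-\prod b_i)^2\geq 0$ if $n$ is even and $-(\prod a_i+\prod b_i)^2<0$ if $n$ is odd, in both cases of sign $(-1)^n$ or $0$; when $s<n$ the product equals $(-1)^n\prod a_i\,(\prod a_i-(-1)^s\prod b_i)$, which has sign $(-1)^n$ or $0$ exactly when $s$ is odd or $\prod a_i\geq\prod b_i$, and has sign $(-1)^{n+1}$ otherwise. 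Hence injectivity holds iff $n=s$, or $s$ is odd, or $\prod a_i\geq\prod b_i$.

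The computations are all routine; the one delicate point is the sign bookkeeping in the Laplace step --- verifying that the expansion sign is literally the same for $\overline\Gamma$ and $\overline\Gamma_l$ (true because the flow columns occupy identical positions in both and differ only by the scalar $-1$ entrywise), which is exactly what lets the two minors collapse into $(-1)^{|P|}$ times a pair of corresponding minors of $\Gamma$ and $\Gamma_l$ eligible for Lemma~\ref{lem:correspProd}.
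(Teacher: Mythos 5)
Your proposal is correct and follows essentially the same route as the paper: apply the injectivity criterion of Theorem~\ref{thm:inj}, Laplace-expand the $n\times n$ minors along the flow columns to reduce each product to $(-1)^{|P|}$ times corresponding minors of $\Gamma$ and $\Gamma_l$ (the paper's equation~(\ref{eq:prodmon})), invoke Lemma~\ref{lem:correspProd} for the sub-maximal minors, and settle the $\beta=\{1,\ldots,n\}$ case by the sign analysis of $\det\Gamma\,\det\Gamma_l$. Your consistent use of part~1 of Theorem~\ref{thm:inj} (and your explicit rank and sign bookkeeping) only makes explicit details the paper leaves implicit; there is no substantive difference.
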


\begin{proof}
The stoichiometric and left stoichiometric matrices are in this case $\overline\Gamma=[\Gamma\vert \!\!-\!\!K\vert L]$ and $\overline\Gamma_l=[\Gamma_l\vert K\vert 0]$, where $K\in{\mathbb R}^{n\times k}$ is a submatrix of the identity matrix. We note that $\overline\Gamma$ has rank $n$. 
To apply Theorem \ref{thm:inj}b, we first show that at least one product of minors 
\begin{equation}\label{eq:prodMinors}
\overline\Gamma[\{1,\ldots,n\}\vert \beta]\overline\Gamma_l[\{1,\ldots,n\}\vert \beta]
\end{equation}
is nonzero \mbox{ for some index set } $\beta$ \mbox{ with } $\vert \beta\vert =n.$
Indeed, if  species $X_j$ is in the outflow, then consider the minor of $\overline\Gamma$ that corresponds to the column $[0,\ldots, 0,-1,0,\ldots, 0]^T$ (-1 at position $j$) and all columns of $\Gamma$  except column $j$. Consider also the corresponding minor of $\overline\Gamma_l$. The product of the two minors is equal to

$$
\begin{vmatrix}
-a_1  & & & & & &-b_1\vert b_1\\
-b_2\vert b_2  &-a_2  & & & & &\\
&\ddots &\ddots & & & &\\
&  &\ddots &-1 & & & \\
& & &0 &\ddots & & \\
& & &  &\ddots &-a_{n-1} & \\
& & &  & &-b_n\vert b_n &-a_n\\
\end{vmatrix}
\begin{vmatrix}
a_1  & & & & & &b_1\vert 0\\
b_2\vert 0  &a_2  & & & & &\\
&\ddots &\ddots & & & &\\
&  &\ddots &1 & & & \\
& & &0 &\ddots & & \\
& & &  &\ddots &a_{n-1} & \\
& & &  & &b_n\vert 0 &a_n\\
\end{vmatrix}
=(-1)^n\prod_{i\neq j}a_i^2\neq 0.
$$

We now consider possible cases for non-zero products of minors (\ref{eq:prodMinors}). Note that $\overline\Gamma_l[\{1,\ldots, n\}\vert \{1,\ldots, n\}]$ equals
$$\prod_{i=1}^n a_i +(-1)^{n+1}\prod_{i=1}^n b_i=(-1)^n\det\Gamma$$ if the CST contains only sequestration reactions (i.e. $s=n$), and $\prod_{i=1}^n a_i$ otherwise.
Therefore the sign of $\overline\Gamma[\{1,\ldots, n\}\vert \{1,\ldots, n\}]\overline\Gamma_l[\{1,\ldots, n\}\vert \{1,\ldots, n\}]$ equals $(-1)^n$ if $s=n$ and equals the sign of 
$\det\Gamma=(-1)^n\prod_{i=1}^n a_i -(-1)^{n+s}\prod_{i=1}^n b_i=(-1)^n[\prod_{i=1}^n a_i-(-1)^s\prod_{i=1}^n b_i]$ otherwise.

If, on the other hand,
$\beta\neq\{1,\ldots,n\}$ then
$\overline\Gamma[\{1,\ldots,n\}\vert \beta]=\det (\Gamma(\{1,\ldots,n\}\vert \beta_1)\vert (-I)(\{1,\ldots,n\}\vert \beta_2))$, where $\vert \beta_1\vert +\vert \beta_2\vert =n$, and $\beta_2$ is nonempty. 
In the Laplace expansion of the determinant $\overline\Gamma[\{1,\ldots,n\}\vert \beta]$ along its last $\vert \beta_2\vert $ columns, we note that there is only one nonzero minor of $(-I)(\{1,\ldots,n\}\vert \beta_2)$, namely $(-I)[\beta_2\vert \beta_2].$ Then, with $\beta'_2$ denoting the complement of $\beta_2$ in $\{1,\ldots, n\}$ we have 
$$\overline\Gamma[\{1,\ldots,n\}\vert \beta]=\epsilon \Gamma[\beta'_2\vert \beta_1] (-I)[\beta_2\vert \beta_2]=\epsilon (-1)^{\vert \beta_2\vert } \Gamma[\beta'_2\vert \beta_1],$$ 
where $\epsilon$ is the signature of the permutation corresponding to the pair ($\alpha_1,\beta_2$). With the same calculation for $\overline\Gamma_l[\beta'_2\vert \beta_1]$ we obtain

\begin{equation}\label{eq:prodmon}
\overline\Gamma[\{1,\ldots,n\}\vert \beta]\overline\Gamma_l[\{1,\ldots,n\}\vert \beta]=(-1)^{\vert \beta_2\vert }\Gamma[\beta'_2\vert \beta_1]\Gamma_l[\beta'_2\vert \beta_1].
\end{equation}

By Lemma \ref{lem:correspProd} the sign of any nonzero product (\ref{eq:prodmon}) is 
$(-1)^{\vert \beta_2\vert +\vert \beta_1\vert }=(-1)^n$ and we conclude that the possible nonzero signs of the product of minors (\ref{eq:prodMinors}) are $(-1)^n$ and $\mbox{sign}(-1)^n(\prod_{i=1}^n a_i -(-1)^s\prod_{i=1}^n b_i)$, and that at least one such product is nonzero. It follows from Theorem  \ref{thm:inj} part 2 that the open CST with outflows is injective if and only if $n=s$ or $\prod_{i=1}^n a_i -(-1)^s\prod_{i=1}^n b_i\ge 0$, i.e. if either $n=s$, $s$ is odd, or if $\prod_{i=1}^n a_i \ge \prod_{i=1}^n b_i.$
\end{proof}

\section{Fully open mass action CSTs} In this section we focus on mass-action kinetics. Suppose the CST has $s$ sequestration reactions. Theorem \ref{thm:CSToutflows} implies that if $n=s$, $s$ is odd, or  if  $\prod_{i=1}^n a_i \ge \prod_{i=1}^n b_i$, then the fully open CST is injective and has at most one positive equilibrium.  In general, non-injectivity of a reaction network does not imply the existence of multiple positive equilibria. However, we now show that non-injective fully open CSTs are in fact multistationary, except for the special case of linear dynamics. Consider a non-injective fully open CST, i.e. suppose $s<n$, $s$ is even and $\prod_{i=1}^n a_i < \prod_{i=1}^n b_i$. Let $\Gamma$ and $\Gamma_l$ denote the stoichiometric and source matrices of the CST without inflow and outflow reactions, and  $\overline\Gamma=[\Gamma\vert  -I]$ and $\overline\Gamma_l=[\Gamma_l\vert  I]$ denote the stoichiometric and source matrices of the open CST with all outflow reactions added (see Remark~\ref{rem:block}).

Together with Theorem \ref{thm:CSToutflows}, the following result completes the characterization of multistationarity in fully open CST networks with mass action kinetics.

\begin{theorem}\label{thm:main} Suppose $s<n$,  $s$ is even, and  $\prod_{i=1}^n a_i < \prod_{i=1}^n b_i$, i.e. the fully open CST network is not injective.
\begin{enumerate}
\item If $s>0$, then the fully open CST network admits multiple positive equilibria under mass action kinetics.

\item If $s=0$ and $1<\prod_{i=1}^n a_i$, then the fully open CST network admits multiple nondegenerate positive equilibria under mass action
 kinetics.
 
\item If $s=0$ and $a_i=1$ for all $i$ then the fully open CST network does not admit multiple positive equilibria under mass action kinetics.
\end{enumerate}
\end{theorem}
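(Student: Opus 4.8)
The three parts have genuinely different flavours, so I would treat them separately. \emph{Part 3} is the easy (negative) one: when $s=0$ and all $a_i=1$, every reaction is $X_i\to b_{i+1}X_{i+1}$ with mass-action rate $k_ix_i$, so the fully open system is affine--linear, $\dot x=Ax+\iota$, where $\iota=(\iota_1,\dots,\iota_n)^T>0$ collects the inflow constants and $A$ has $A_{ii}=-(\delta_i+k_i)<0$, $A_{i,i-1}=b_ik_{i-1}>0$ and zeros elsewhere; positive steady states solve $Ax=-\iota$. If $\det A\ne0$ this has a unique solution, so at most one positive steady state. If $\det A=0$ then $A$ has rank $n-1$ (deleting row and column $1$ leaves the nonzero minor $\prod_{i\ge2}(\delta_i+k_i)$), and the cyclic recursions $Av=0$ and $A^Tw=0$ show that $\ker A$ and $\ker A^T$ are each spanned by a strictly positive vector; since $\iota>0$ we get $\iota\cdot w\ne0$, hence $-\iota\notin\operatorname{im}A$ and there is no steady state at all. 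Either way there is at most one positive steady state.

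For \emph{Parts 1 and 2} the plan is to apply the Jacobian optimization criterion, Theorem~\ref{thm:craciunSuff}. With $\overline\Gamma=[\Gamma\mid-I]$, $\overline\Gamma_l=[\Gamma_l\mid I]$ and a positive diagonal weight $D=\diag(d_1,\dots,d_n,e_1,\dots,e_n)$ (weights $d_i$ on the CST reactions, $e_i$ on the outflows), one has $\overline\Gamma D\overline\Gamma_l^T=\Gamma D_d\Gamma_l^T-D_e$ and $\overline\Gamma D\mathbf 1=\Gamma d-e$, so the two hypotheses become $(-1)^n\det\bigl(\Gamma D_d\Gamma_l^T-D_e\bigr)<0$ and $\Gamma d\le e$. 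The key observation is that $\det\bigl(\Gamma D_d\Gamma_l^T\bigr)=\bigl(\prod_id_i\bigr)\det\Gamma\,\det\Gamma_l$, and under the standing hypothesis $\det\Gamma_l=\prod a_i>0$ while $\det\Gamma=(-1)^n\bigl(\prod a_i-\prod b_i\bigr)$, so this product is nonzero and has sign $(-1)^{n+1}$ — exactly the sign we want. In \emph{Part 1}, since $s>0$ there is a sequestration $R_{i_0}$; set $d_{i_0+1}=1$ and propagate around the cycle, putting $d_k=\tfrac{b_k}{a_k}d_{k-1}$ if $R_{k-1}$ is a transmutation and $d_k=d_{k-1}$ if $R_{k-1}$ is a sequestration. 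The ``closing'' inequality at index $i_0+1$ involves the sequestration $R_{i_0}$ and holds automatically, so one gets $d>0$ with $\Gamma d\le0$; then any small $e>0$ satisfies $\Gamma d\le e$, and $(-1)^n\det(\Gamma D_d\Gamma_l^T-D_e)\to(-1)^n\det(\Gamma D_d\Gamma_l^T)<0$ as $e\to0$, so Theorem~\ref{thm:craciunSuff} applies for $e$ small.

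\emph{Part 2} is harder: when $s=0$ the inequalities needed to force $\Gamma d\le0$ are $d_j\ge\tfrac{b_j}{a_j}d_{j-1}$ for every $j$, and their product around the cycle would force $\prod b_i\le\prod a_i$, contradicting the hypothesis — so at least one outflow weight must stay genuinely positive. Since $s=0$ we have $\Gamma_l=\diag(a_i)$, so writing $c_i:=a_id_i$ we get $\overline\Gamma D\overline\Gamma_l^T=\Gamma\diag(c_i)-D_e$, a matrix of class $\mathcal M$ with diagonal entries $-a_ic_i-e_i$ and subdiagonal entries $b_ic_{i-1}$ (indices mod $n$); Lemma~\ref{lem:CSTminors} then gives $(-1)^n\det(\overline\Gamma D\overline\Gamma_l^T)=\prod_i(a_ic_i+e_i)-\bigl(\prod_ib_i\bigr)\bigl(\prod_ic_i\bigr)$, while $(\Gamma d)_j=-c_j+\tfrac{b_j}{a_{j-1}}c_{j-1}$. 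Because $\prod a_i>1$ we may fix an index with $a_{i^*}\ge2$ and relabel the cycle so that $i^*=1$. Take $c_1=1$ and $c_j=\tfrac{b_j}{a_{j-1}}c_{j-1}$ for $j\ge2$: this makes $(\Gamma d)_j=0$ for $j\ge2$ and $(\Gamma d)_1=\tfrac{\prod b_i}{\prod a_i}-1>0$, so we may take $e_1=(\Gamma d)_1$ and let $e_2,\dots,e_n\downarrow0$, and $\Gamma d\le e$ holds throughout. In the limit, $(-1)^n\det(\overline\Gamma D\overline\Gamma_l^T)\to\bigl(a_1c_1+e_1\bigr)\prod_{j\ge2}a_jc_j-\bigl(\prod_ib_i\bigr)\prod_{j\ge2}c_j$, which (using $c_1=1$) simplifies to $\tfrac{a_1-1}{a_1}\bigl(\prod a_i-\prod b_i\bigr)\prod_{j\ge2}c_j<0$ since $a_1\ge2$ and $\prod a_i<\prod b_i$. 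Hence for small $e_2,\dots,e_n$ Theorem~\ref{thm:craciunSuff} applies and the fully open CST has multiple positive equilibria.

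The remaining — and, I expect, hardest — point is upgrading ``multiple positive equilibria'' to ``multiple \emph{nondegenerate} positive equilibria'' in Part~2, which Theorem~\ref{thm:craciunSuff} does not provide by itself. Here I would use that, for $s=0$, steady states of the fully open system correspond to fixed points of the loop map $\Psi:=\phi_n\circ\cdots\circ\phi_1$ on $(0,\infty)$, where $\phi_j$ sends $x_{j-1}$ to the unique positive root $x_j$ of $\iota_j-\delta_jx_j-a_jk_jx_j^{a_j}+b_jk_{j-1}x_{j-1}^{a_{j-1}}=0$. Each $\phi_j$ is $C^1$ and increasing, so $\Psi$ is increasing, $\Psi(0^+)>0$, and $\Psi(x)-x\to+\infty$ as $x\to\infty$ (this uses $\prod b_i>\prod a_i$); moreover a fixed point $x^*$ is degenerate precisely when $\Psi'(x^*)=1$, and a computation of $\Psi'$ (cleanest when one outflow rate $\delta_1$ is nonnegligible and the rest tiny) gives $\Psi'(x^*)=\tfrac{\prod b_i}{\prod a_i}\cdot\tfrac{a_1^2v_1}{\delta_1x_1^*+a_1^2v_1}$ at a steady state, which can be made to exceed $1$ for suitable parameters exactly when some $a_i\ge2$ — precisely the regime of Part~2 (in the linear case $\prod a_i=1$ the loop gain at a positive steady state cannot exceed $1$, matching Part~3). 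Given such $x^*$, $h:=\Psi-\mathrm{id}$ is positive near $0$, strictly negative just left of $x^*$, and positive near $\infty$, hence has at least two zeros; and since the set of mass-action rate constants admitting a degenerate positive steady state is a proper subvariety (it is cut out by one more equation than there are unknowns $x$), a small perturbation preserves both equilibria and makes them nondegenerate. Besides this genericity step, the delicate bookkeeping lies in the $\Psi'$ computation and in the determinant estimate of Part~2.
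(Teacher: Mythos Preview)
Your Parts~1 and~3 are correct and close in spirit to the paper's arguments. For Part~1 the paper also uses Theorem~\ref{thm:craciunSuff} with essentially the same limiting idea (it takes the last reaction to be a sequestration and sets $d_i=\frac{b_1\cdots b_i}{a_1\cdots a_i}$, $D_e=\epsilon I$, which gives $(\Gamma d)_i\le 0$ by the same upper bound you use). For Part~3 the paper argues by contradiction---two equilibria force $\det(\Gamma D_1-D_2)=0$, and then rewriting $(\Gamma D_1-D_2)x=-f<0$ with $x=\mathrm{diag}(x)\mathbf 1$ produces inequalities whose product contradicts the determinant identity from Lemma~\ref{lem:CSTminors}; your direct Fredholm-alternative argument is a clean variant of the same linear-algebra content.

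The substantive divergence is in Part~2. Your application of Theorem~\ref{thm:craciunSuff} there is correct and is \emph{not} how the paper proceeds: the computation $(-1)^n\det(\overline\Gamma D\overline\Gamma_l^T)\to\frac{a_1-1}{a_1}\bigl(\prod a_i-\prod b_i\bigr)\prod_{j\ge 2}c_j<0$ is right, and it gives multiple positive equilibria. This is a nice alternative existence argument.

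Where your proposal has a genuine gap is the upgrade to \emph{nondegenerate} multistationarity. Your loop-map reduction and the identification ``degenerate $\Leftrightarrow\Psi'(x^*)=1$'' are correct (via Lemma~\ref{lem:CSTminors} applied to the Jacobian, which lies in $\mathcal M$). But the concluding step---``the set of rate constants admitting a degenerate positive steady state is a proper subvariety, so perturb''---is not justified: that locus is a \emph{projection} of an algebraic set and need not itself be a proper subvariety, and even granting genericity you have not shown that a perturbation keeping two equilibria and removing the possible tangency at the smaller fixed point exists. Your own $\Psi'$ formula is also computed in the limit $\delta_2,\dots,\delta_n\to 0$, i.e.\ for a network that is no longer fully open, so it does not directly control the fully open system.

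The paper sidesteps all of this by a more direct route that your last paragraph is in fact groping toward: it works first with the CST having inflow/outflow for $X_1$ \emph{only}, where (since $s=0$) the steady state equations telescope to $x_n^{a_n}=\frac{k_1}{k_n}\frac{b_2\cdots b_n}{a_2\cdots a_n}\,x_1^{a_1}$ and hence to a single univariate equation $P(x_1)=c\,x_1^{a_1}-l_1x_1+f_1=0$ with $c>0$. Choosing $c,l_1,f_1$ so that $P$ has the two prescribed roots $x_1=1,2$ is immediate because $a_1\ge 2$, and nondegeneracy is checked by an explicit Jacobian computation (using \eqref{eq:Jac} and one Cauchy--Binet expansion) that reduces to $-a_1v_1(x)/k_1+l_1\neq 0$, i.e.\ $P'(x_1)\neq 0$ at $x_1=1,2$. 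Finally, nondegenerate multistationarity is lifted to the fully open network by the inheritance result (Theorem~\ref{thm:inheritance}). This gives nondegeneracy by computation rather than by an unproven genericity claim, and it is the missing idea in your Part~2.
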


\begin{proof} We prove part 1. Without loss of generality, assume that the last reaction is a sequestration, $a_nX_n+b_1X_1\to 0$.
Let $\epsilon>0$ and let $d_i=\frac{b_1\ldots b_i}{a_1\ldots a_i}$ for $1\le i\le n$. Let $D_1=\mbox{diag} (d_1,\ldots, d_n)$ and
$$D=
\begin{bmatrix}
D_1 &0\\
0 &\epsilon I
\end{bmatrix}
.$$

We have 

\begin{eqnarray*}
\overline\Gamma D{\bf 1}&=&[\Gamma D_1\vert -\epsilon I]{\bf 1}< \Gamma D_1 {\bf 1}\le\\
&\le& [-d_1a_1 - d_nb_1, -d_2a_2+d_1b_2,  -d_3a_3+d_2b_3,\ldots, -d_na_n+d_{n-1}b_n]\le 0
\end{eqnarray*}
\noindent (all components except the first one are equal to zero).

Moreover, since $s<n$, $\det\Gamma_l$ contains only one positive monomial. We have $(-1)^n\det(\overline\Gamma D\overline\Gamma_l^T)=(-1)^n\det(\Gamma D_1 \Gamma_l^T-\epsilon I)$, and we see that
$\lim_{\epsilon\to 0}(-1)^n\det(\overline\Gamma D\overline\Gamma_l^T)=
(-1)^n\det\Gamma\det D_1 \det\Gamma_l=(\prod_{i=1}^n a_i -(-1)^s\prod_{i=1}^n b_i)\det D_1 \det\Gamma_l=(\prod_{i=1}^n a_i -\prod_{i=1}^n b_i)\det D_1 \det\Gamma_l<0.$
We can therefore pick $\epsilon>0$ small so that the hypotheses of Theorem \ref{thm:craciunSuff} are satisfied, and the conclusion follows.

For part 2 assume without loss of generality that $a_1>1$.  We show that the CST with only the $X_1$ inflow and outflow reactions added (and not the ones for $X_2,\ldots, X_n$) has nondegenerate multiple equilibria. Once this is done, it follows that the fully open CST multiple nondegenerate steady states as well by Theorem \ref{thm:inheritance}.

Consider then the CST system with inflow/outflow added for $X_1$ only. At steady state,
$\dot x_i=k_{i-1}b_ix_{i-1}^{a_{i-1}}-k_ia_i{x_i}^{a_i}=0$ for $i\ge 2$, which gives
\begin{equation}\label{eq:nice}
x_n^{a_n}=\frac{k_{n-1}}{k_n}\cdot\frac{b_n}{a_n}x_{n-1}^{a_{n-1}}=
\frac{k_{n-1}k_{n-2}}{k_nk_{n-1}}\cdot\frac{b_nb_{n-1}}{a_na_{n-1}}x_{n-2}^{a_{n-2}}=\ldots
=\frac{k_1}{k_n}\frac{b_2\ldots b_n}{a_2\ldots a_n}x_1^{a_1}
\end{equation}
Next, $\dot x_1=0$ yields
$$-k_1a_1x_1^{a_1}+k_nb_1x_n^{a_n}-l_1x_1+f_1=0,$$
where $l_1$ and $f_1$ denote the rate constants of the outflow and inflow of $X_1$. Using (\ref{eq:nice}) we get
$$P(x_1)=k_1a_1\left(\frac{b_1\ldots b_n}{a_1\ldots a_n}-1\right)x_1^{a_1}-l_1x_1+f_1=0.$$
Letting
$$k_1=a_1^{-1}\left(\frac{b_1\ldots b_n}{a_1\ldots a_n}-1\right)^{-1}>0,\ l_1=2^{a_1}-1>0,\ f_1=2^{a_1}-2>0,$$
$P$ has roots 1 and 2. Using (\ref{eq:nice}) we compute two  positive  equilibria with $x_1=1$ and $x_1=2$ respectively. To see that these are nondegenerate equilibria note that the Jacobian matrix of the CST with flow reactions for $X_1$ can be written as (see (\ref{eq:Jac})) 
$$Df=\overline \Gamma D_{\overline v(x)}\overline \Gamma_l^TD_{1/x}$$
which is non-singular if and only if 
$\det(-\overline \Gamma D_{\overline v(x)}\overline \Gamma_l^T)\neq 0$, or equivalently
\begin{equation}\label{eq:nondeg1}
\det(-\Gamma D_{v(x)}\Gamma_l^T)+l_1
(-\Gamma D_{v(x)}\Gamma_l^T)[\{2:n\}\vert \{2:n\}]\neq 0
\end{equation}

We apply Cauchy-Binet to the second determinant:

$$(-\Gamma D_{v(x)}\Gamma_l^T)[\{2:n\}\vert \{2:n\}]=\sum_{\substack{\alpha\subseteq\{1,\ldots, n\}\\ \vert \alpha\vert =n-1}}(-\Gamma[\{2:n\}\vert \alpha])D_{v(x)}[\alpha\vert \alpha]\Gamma_l[\{2:n\}\vert \alpha]$$
and note that  only one minor $\Gamma_l[\{2:n\}\vert \alpha]$ is non-zero, namely when $\alpha=\{2:n\}$. Therefore (\ref{eq:nondeg1}) implies that the condition for nondegeneracy of an equilibrium $x\in{\mathbb R_{>0}^n}$ is 

\begin{eqnarray*}
&&\det(-\Gamma)\det D_{v(x)}\det(\Gamma_l)-\\
-&&l_1(-\Gamma[\{2:n\}\vert \{2:n\}] D_{v(x)}[\{2:n\}\vert \{2:n\}]\Gamma_l[\{2:n\}\vert \{2:n\}]\neq 0,
\end{eqnarray*}
or equivalently
\begin{eqnarray}\nonumber
& &\left(\prod_{i=1}^n a_i - \prod_{i=1}^n b_i\right)\prod_{i=1}^n v_i(x)\prod_{i=1}^n a_i+l_1 \prod_{i=2}^n a_i^2\prod_{i=2}^n v_i(x)\\\nonumber
&=&\prod_{i=2}^n a_i^2 \prod_{i=2}^n v_i(x)\left(\left(1-\frac{b_1\ldots b_n}{a_1\ldots a_n}\right)a_1^2 v_1(x) +l_1\right)\\\label{eq:nondeg2}
&=&\prod_{i=2}^n a_i^2 \prod_{i=2}^n v_i(x)\left(-a_1v_1(x)/k_1 +l_1\right)\neq 0.
\end{eqnarray}

It remains to check (\ref{eq:nondeg2}) for our two equilibrium points. 
For the equilibrium with $x_1=1$ we get $v_1(x)=k_1$ and 
$-a_1+l_1=2^{a_1}-a_1-1>0$ since $a_1\geq 2$. For the equilibrium with $x_1=2$ we get $v_1(x)=k_12^{a_1}$ and 
$-a_12^{a_1}+l_1=(1-a_1)2^{a_1}-1<0$.
%%%%%%%%%%%%%%%%%%%%%%%%%%%%%%%%%%%%%%%%%
%%%%%%%%%%%%%%%%%%%%%%%%%%%%%%%%%%%%%%%%%

For part 3, suppose there are two distinct positive equilibria $x,y\in\mathbb R_{>0}^n$. Let $k_i$ denote the reaction rate of $X_i\to b_{i+1}X_{i+1}$, let $l_i$ denote the outflow rate of $X_i$, and let $f_i$ denote the inflow rate of $X_i$. Set $D_1=\mbox{diag}(k_1,\ldots, k_n)$, $D_2=\mbox{diag}(l_1,\ldots, l_n)$, and $f=[f_1,\ldots, f_n]$. We have $(\Gamma D_1-D_2)x=(\Gamma D_1-D_2)y=-f$. This implies that $\det(\Gamma D_1-D_2)=0$. Writing $x=D{\bf 1}$, where $D=\mbox{diag}(x)$, we have
$(\Gamma D_1-D_2)D{\bf 1}<0$. With $\tilde D_1=D_1D=\mbox{diag}(\tilde k_i)$ and $\tilde D_2=D_2D=\mbox{diag}(\tilde l_i)$ we therefore have
$(\Gamma \tilde D_1-\tilde D_2){\bf 1}<0,$ or equivalently
\begin{equation}\label{eq:unu1}
\tilde k_ib_{i+1}<\tilde k_{i+1}+\tilde l_{i+1}, \ i=1,\ldots, n.
\end{equation}

However, $\det(\Gamma\tilde D_1-\tilde D_2)=\det(\Gamma D_1-D_2)\det D=0$, so that (by Lemma \ref{lem:CSTminors})

$$\prod_{i=1}^n (\tilde k_i+\tilde l_i) = \prod_{i=1}^n \tilde k_i b_i,$$
which contradicts (\ref{eq:unu1}).
\end{proof}

\section{A simple algorithm for deciding the capacity for multistationarity in fully open CST networks}
We present here an algorithm for conclusively establishing the capacity for multistationarity (or lack thereof) for any fully open CST network. 
The algorithm takes the form of a flowchart (see Figure \ref{fig:main})  and also serves as a graphical summary of the main results in the previous section, i.e., Theorems  \ref{thm:CSToutflows} and \ref{thm:main}. (Recall that an injective network does {\em not} have capacity for multistationarity.)

Remarkably, the input to the algorithm  consists of only four integer parameters, two of which are counts of each type of reaction: 
\begin{enumerate}
    \item the number of sequestration reactions ($s$),
    \item the number of transmutation reactions ($t$),
\end{enumerate}
and  the other two are simple functions of the stoichiometric coefficients:

\begin{enumerate}
    \item[3.] $\displaystyle {\rm sgn}\left(\prod a_i - \prod b_i\right)$, 
    \item[4.] $\displaystyle {\rm sgn}\left(\prod a_i - 1\right)$.
\end{enumerate}

Note, in particular, that the conditions above {\em do not depend on the order} in which the sequestration  and transmutation reactions appear along the CST network cycle. 
% See Figure \ref{fig:main} for the flowchart depiction of the 

\begin{figure}[ht]
\hskip1cm
\begin{tikzpicture}[sibling distance=10em,
 every node/.style = {shape=rectangle, rounded corners,
    draw, align=center,
    top color=white, bottom color=gray!20}]]
 \node {$CST(s,t)$}
    child {node {$s$ odd \\ (injective)} }
    child {node {$s$ even}
     		child { node {$s=n$ \\ (injective)}
            }
            child {node {$s<n$}
        		child { node {$\prod a_i \ge \prod b_i$ \\ (injective)}}
        		child { node {$\prod a_i < \prod b_i$ \\ (not injective)}
        			child {node {$s \ne 0$ \\ (multistationary)}}
        			child {node {$s=0$}
				child {node {$a_i=1$ for all $i$ \\ (not multistationary)}}
				child {node {$a_i>1$ for some $i$ \\ (multistationary)}}
	}
	}
        }
        };
\end{tikzpicture}
\caption{Characterization of multistationarity for fully open CST networks. Here $s$ and $t$ represent the number of sequestration and transmutation reactions, respectively.}\label{fig:main}
\end{figure}
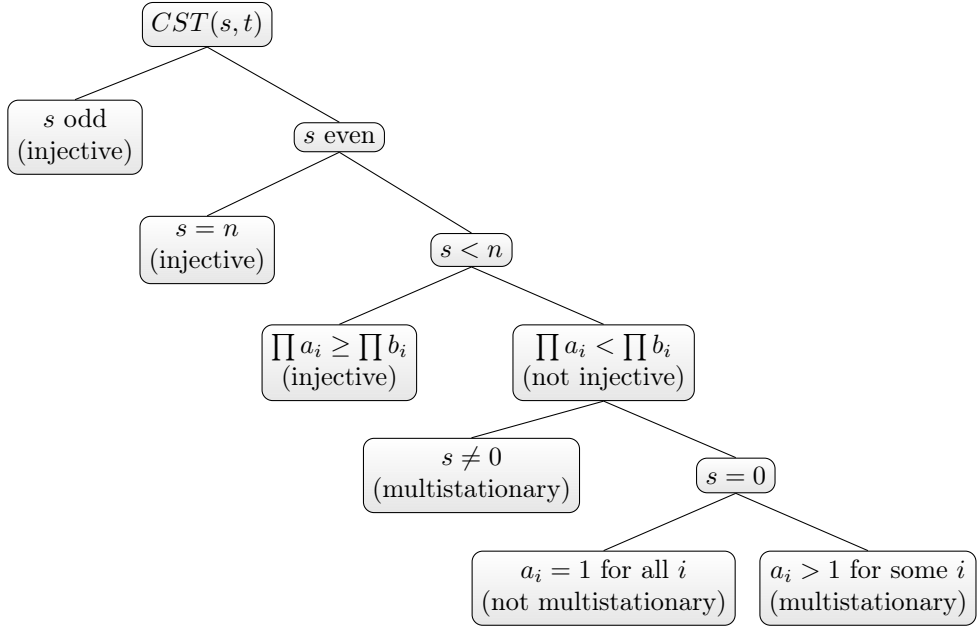

\section {CST atoms of multistationarity} 
We conclude by noting that  fully open CSTs described in Theorem 5.1 parts 1 and 2 are minimally multistationary (i.e., they are  ``atoms of multistationarity'') in the following sense. 

\begin{theorem}\label{thm:atoms}
Let $\cal R$ be a fully open CST network. Suppose we obtain ${\cal R}'$ from $\cal R$ by removing (1) any number of species from all reactions in which they participate and (2) any number of non-flow reactions.  If a trivial reaction (one in which the reactant and product complexes are the same) is obtained in ${\cal R}'$, then it is removed. Also removed are extra copies of repeated reactions in ${\cal R}'$.  Then $\cal R'$ is injective. 
\end{theorem}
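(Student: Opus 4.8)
The plan is to reduce to the already-proved injectivity theorems by showing that $\mathcal R'$, after the indicated reductions, is itself a fully open network that either is a fully open CST of the "injective type" or else degenerates into a network whose injectivity is immediate. First I would observe that the non-flow reactions of $\mathcal R$ form a single cycle $R_1,\dots,R_n$ on species $X_1,\dots,X_n$, each $R_i$ being a sequestration $a_iX_i+b_{i+1}X_{i+1}\to 0$ or a transmutation $a_iX_i\to b_{i+1}X_{i+1}$; deleting species and deleting non-flow reactions acts on this cyclic structure in a controlled way. The key combinatorial claim is: after deleting a set of species (from all reactions, including flows) and a set of the cyclic reactions, and then discarding trivial and duplicated reactions, the surviving non-flow reactions still have the stoichiometric/reactant matrix structure of class $\mathcal M$ (up to relabeling), because every surviving reaction still has at most one reactant among the surviving species and at most one product among the surviving species, and the "links" $X_i\to X_{i+1}$ that remain form a disjoint union of paths and at most one cycle.

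The main case analysis is then: (i) if no full cycle survives among the non-flow reactions, then the surviving non-flow reaction vectors, together with the full set of outflow reactions $-I_n$ still present in the fully open $\mathcal R'$, give a matrix $\overline\Gamma'=[\Gamma'\mid -I]$ of full rank $n$ with $\overline\Gamma_l'=[\Gamma_l'\mid I\mid 0]$, and — exactly as in the argument preceding Theorem~\ref{thm:CSTinjopen} and in the proof of Theorem~\ref{thm:CSToutflows} — the only nonzero products $\overline\Gamma'[\{1,\dots,n\}\mid\beta]\,\overline\Gamma_l'[\{1,\dots,n\}\mid\beta]$ reduce via Lemma~\ref{lem:correspProd} to sign $(-1)^n$, so Theorem~\ref{thm:inj} gives injectivity under mass action (the statement only claims injectivity, not general-kinetics injectivity, so this suffices); (ii) if a full cycle does survive, then $\mathcal R'$ is again a fully open CST, but I must check it is of the injective type. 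Here is where I expect the real obstacle: I need to show that the surviving cycle cannot have $s'$ even, $s'<n'$, and $\prod a_i'<\prod b_i'$ simultaneously — i.e. it cannot land in the non-injective regime of Theorem~\ref{thm:main}. The point should be that any species-deletion that keeps the cycle intact deletes a \emph{contiguous arc} of species and merges the boundary reactions, and a careful bookkeeping of how $s$, $n$, and the parity and the product $\prod a_i/\prod b_i$ change under such a merge shows the non-injective regime of the parent already forces injectivity of the child; I would isolate this as a lemma and prove it by tracking one elementary reduction step (delete one species, or delete one reaction) at a time.

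Concretely, the step-by-step outline is: (1) set up notation for the cyclic structure and describe precisely what "delete a species / delete a non-flow reaction / collapse trivial and repeated reactions" does to the list $R_1,\dots,R_n$; (2) prove the structural lemma that the surviving non-flow reaction matrices still lie in class $\mathcal M$ (or split as a block-diagonal of $\mathcal M$-type pieces and isolated links), so Lemmas~\ref{lem:CSTminors} and~\ref{lem:correspProd} still apply; (3) handle case (i) (no surviving full cycle) by the minor-sign computation above, citing Theorem~\ref{thm:inj}; (4) handle case (ii) (surviving full cycle is a fully open CST) by the reduction-step lemma showing the parameters stay in the injective range of Theorems~\ref{thm:CSToutflows} and~\ref{thm:main}; (5) note that deleting \emph{flow} reactions is not permitted by the hypothesis (only non-flow reactions and species are removed), so $\mathcal R'$ remains fully open and the flow reactions are all present, which is what makes the minor arguments go through.

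The hardest part will be step (4): ruling out that a sub-cycle of a non-injective fully open CST is itself non-injective. I would attack it by induction on the number of elementary reductions, where a single elementary reduction is either "remove one species $X_i$ from every reaction" (which, if the cycle is preserved, turns the two reactions touching $X_i$ into one reaction connecting $X_{i-1}$ to $X_{i+1}$, and one should record whether that merged reaction is a sequestration or a transmutation and how it affects $\prod a_i$ vs $\prod b_i$) or "remove one non-flow reaction $R_i$" (which breaks the cycle, landing us in case (i)). A short check then shows each elementary reduction that keeps the cycle either drops $s$ to an odd value, or makes $s=n$, or pushes $\prod a_i\ge\prod b_i$, or reduces to $s=0$ with some $a_i=1$ — in every instance one of the injective conditions of Figure~\ref{fig:main} holds — completing the argument.
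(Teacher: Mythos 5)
Your case~(ii) --- the one you single out as ``the real obstacle'' --- rests on a misreading of the reduction operation, and this is where the proposal has a genuine gap. Removing a species in the embedded-network sense deletes it from every reaction but does \emph{not} merge the two cycle reactions incident to it: $a_{i-1}X_{i-1}+b_iX_i\to 0$ (or $a_{i-1}X_{i-1}\to b_iX_i$) simply becomes $a_{i-1}X_{i-1}\to 0$, and $a_iX_i+b_{i+1}X_{i+1}\to 0$ (or $a_iX_i\to b_{i+1}X_{i+1}$) becomes $b_{i+1}X_{i+1}\to 0$ or $0\to b_{i+1}X_{i+1}$. Consequently any proper removal (of at least one species or one non-flow reaction) destroys the cycle, so the situation ``$\mathcal R'$ is again a fully open CST with a full cycle'' never occurs; there is no contiguous-arc deletion, no merged reaction, and no parameter bookkeeping to do. As written, your step (4) is both based on an operation that is not the one in the statement and left unproven (``a short check shows\dots''), so the proposal does not close.

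The good news is that your case~(i) is essentially the paper's entire proof, and it covers everything once the above is understood. The one point you should make explicit, in place of the vague ``still in class $\mathcal M$'' structural lemma, is that the columns of the non-flow stoichiometric and reactant matrices $\Gamma'$, $\Gamma_l'$ of $\mathcal R'$ are columns of $\Gamma$, $\Gamma_l$ restricted to the surviving rows, so after the Laplace expansion over the outflow columns (exactly as in \eqref{eq:prodmon} in the proof of Theorem~\ref{thm:CSToutflows}) every surviving product of minors is a product of \emph{proper} minors of $\Gamma$ and $\Gamma_l$; by Lemma~\ref{lem:CSTminors} these are monomials, and Lemma~\ref{lem:correspProd} pins their common sign at $(-1)^{n'}$, where $n'$ is the number of surviving species (not $n$). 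You should also record that at least one such product is nonzero --- e.g.\ the one using all $n'$ outflow columns, or the outflow-plus-$a_i$-columns computation from Theorem~\ref{thm:CSToutflows} --- so that Theorem~\ref{thm:inj} applies; this in fact yields injectivity under general kinetics, not just mass action.
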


Before we sketch the proof of the theorem, we make a few comments. The process of obtaining ${\cal R}'$ from ${\cal R}$ makes ${\cal R}'$ into an {\em embedded network} of ${\cal R}$. We will not present the detailed terminology here (the reader is referred to \cite[Definition 2.2]{joshi2013atoms}), but we briefly describe an example. Consider the fully open CST network ${\cal R}$ 
\begin{eqnarray}\label{net:seq1}
X_1&\to& 2X_2\\\nonumber
X_2+2X_3&\to& 0\\\nonumber
3X_3+X_4&\to& 0\\\nonumber
2X_4&\to& X_1\\\nonumber
X_i&\rightleftharpoons& 0,\quad  1\le i\le 4
\end{eqnarray}

Removing the second reaction and species $X_4$ one obtains the network embedded in ${\cal R}$

\begin{eqnarray}\label{net:seq2}
X_1&\to& 2X_2\\\nonumber
3X_3&\to& 0\\\nonumber
X_i&\rightleftharpoons& 0,\quad  1\le i\le 3
\end{eqnarray}

Note that the fourth reaction of ${\cal R}$ becomes a duplicate of the inflow reaction $0\to X_1$, which by convention we only list once. 

Theorem \ref{thm:atoms} states that fully open CST networks are minimally multistationary with respect to the ``embedded'' relationship. In other words, they are a version of ``atoms of multistationarity'', a notion introduced in \cite{joshi2013atoms}. We remark that in that paper the authors require that the multiple steady states of an atom also be nondegenerate; the nondegeneracy of steady states for fully open CSTs is subject of future work. 

\medskip

\noindent {\em Proof of Theorem \ref{thm:atoms}.} The proof uses the exact same argument from Theorem \ref{thm:CSTinjopen}. We briefly sketch the argument in what follows. Let $\Gamma$ and $\Gamma_l$ denote the stoichiometric and reactant matrices of the non-flow part of the CST network $\cal R$ and let $\Gamma'$ and $\Gamma'_l$ denote the stoichiometric and reactant matrices of the non-flow part of the embedded network ${\cal R}'$. As usual then, $\bar\Gamma'$ and $\bar\Gamma'_l$ will denote the stoichiometric and reactant matrices of ${\cal R}'$. Minors of $\bar\Gamma'$ and $\bar\Gamma'_l$ are computed using Laplace expansion as in the proof of Theorem \ref{thm:CSToutflows}. Products of corresponding minors of $\bar\Gamma'$ and $\bar\Gamma'_l$ reduce to products of corresponding minors in $\Gamma'$ and $\Gamma'_l$, see (\ref{eq:prodmon}). On the other hand, minors of $\Gamma'$ and $\Gamma'_l$ are strict minors of $\Gamma$ and $\Gamma_l$, which by Lemma \ref{lem:CSTminors} are monomials, or zero. Then the argument after (\ref{eq:prodmon}) follows through.

\section{Application: multistationarity in a model of VEGFR dimerization}\label{sec:Appl}

Endothelial cells make up the lining of blood and lymphatic vessels, and plays important roles in many physiological mechanisms including regulation of vasomotor tone and blood fluidity, control of nutrients and leukocytes across the vascular wall, innate and acquired immunity, and angiogenesis (the growth of new blood vessels from existing vasculature). This diversity of roles of the endothelium is reflected in a remarkable structural and functional heterogeneity of endothelial cells, which can be related to the multistationarity of a pathway induced by VEGF (vascular endothelial  growth factor), a key component of endothelial cell proliferation and angiogenesis \cite{Regan2012.aa}. 

VEGF binds VEGF receptors (VEGFR) found on the surface of the cell via two binding sites, and the binding of a VEGF molecule to two VEGFR molecules induces signal transduction. A standard  model for this dimerization, considered in \cite{VEGF}, is described in Figure \ref{fig:vegf}. We assume that each species has nonzero inflow and outflow rates, i.e. that the network is fully open. This is biologically relevant when, for example, there is an outside domain with high molecule concentration \cite{Ye.2013aa}.  
\begin{figure}[h]
\begin{minipage}{0.49\textwidth}
\begin{tikzpicture}[scale=3,
arc/.style={<->, shorten <=1pt,shorten >=1pt, >=stealth',semithick}]
\node at (0,0) (R) {$R$};
\node at (1,0) (VR) {$VR$}
edge [arc] (R);
\node at (2,0) (R*VR*) {$R^*VR^*$}
edge [arc] (VR);
\node at (0,1) (RR) {$RR$}
edge [arc] (R);
\node at (1,1) (VRR) {$VRR$}
edge [arc] (RR)
edge [arc] (VR);
\node at (2,1) (D) {$R^*VR^{*\Delta}$}
edge [arc] (VRR)
edge [arc] (R*VR*);
\node at (-0.3, 0.3) (R1) {$R$}
edge[arc, <-, out=0, in=-90, >=stealth',semithick] (0,0.5);
\node at (0.3, 1.3) (V1) {$V$}
edge[arc, <-, out=-90, in=180, >=stealth',semithick] (0.5,1);
\node at (0.3, 0.3) (V2) {$V$}
edge[arc, <-, out=-90, in=180, >=stealth',semithick] (0.5,0);
\node at (0.7, 0.3) (R2) {$R$}
edge[arc, <-, out=0, in=-90, >=stealth',semithick] (1,0.5);
\node at (1.3, 0.3) (R3) {$R$}
edge[arc, <-, out=-90, in=180, >=stealth',semithick] (1.5,0);
\end{tikzpicture}
\end{minipage}
\begin{minipage}{0.49\textwidth}
{}
\vspace{1cm}
{\small
\begin{eqnarray}\nonumber
&2R\rightleftharpoons RR \qquad V+R \rightleftharpoons VR\\\nonumber
&V+RR\rightleftharpoons VRR
\qquad VR+R\rightleftharpoons VRR\\\nonumber &VR+R\rightleftharpoons R^*VR^* \qquad
R^*VR^*\rightleftharpoons R^*VR^{*\Delta}\\\nonumber 
&VRR\rightleftharpoons R^*VR^{*\Delta}\\\nonumber 
&R\rightleftharpoons 0, \quad RR\rightleftharpoons 0, 
\quad V\rightleftharpoons 0, \quad VRR\rightleftharpoons 0\\\nonumber
&VRR\rightleftharpoons 0,\quad R^*VR^{*\Delta}\rightleftharpoons 0, \quad R^*VR^*\rightleftharpoons 0
\end{eqnarray}
}
\end{minipage}
\caption{Dimerization of VEGF receptors.  $R$ and $V$ denote the VEGFR receptor monomer and VEGF respectively. The naming convention of bound molecules reflect binding partners (for example $VRR$ is VEGF bound to one VEGFR monomer of a VEGFR dimer, while $RVR$ denotes VEGF bound to two VEGFR monomers. A phoshorylated receptor is marked with $^*$, and $\Delta$ indicates that there is a bond between any two of the three components of the molecule. See \cite{VEGF} for details.
}\label{fig:vegf}
\end{figure}
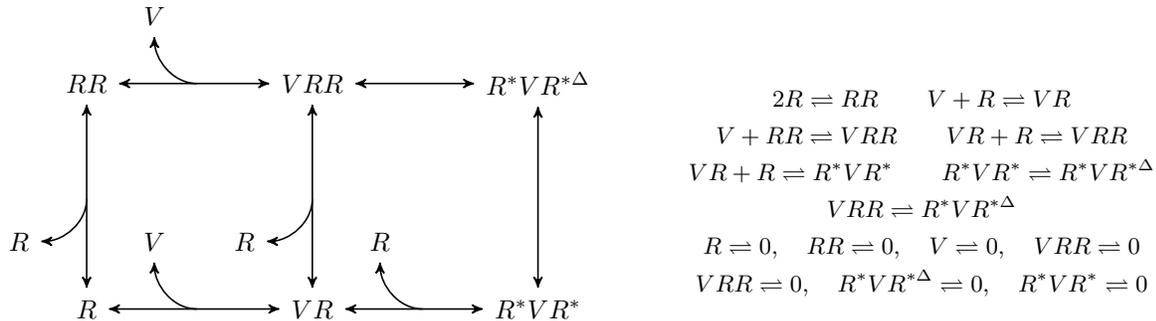

We use the results in this paper to show that the VEGFR dimerization network in Figure \ref{fig:vegf} is multistationary. Namely, we exhibit a multistationary CST that can be built up to the VEGF network by way of modifications in Theorem \ref{thm:inheritance}. An automated way of searching for such a CST substructure in general networks is currently being implemented in CoNtRol \cite{Donnell.2014fg}.

We start with the fully open CST network 

\begin{eqnarray}\label{eq:net1}
    RR\to 2R \qquad R+V\to 0 \qquad V+RR\to 0\\\nonumber
    RR\rightleftharpoons 0\qquad  R\rightleftharpoons 0\qquad  V\rightleftharpoons 0.
\end{eqnarray}
\noindent which is multistationary according to the first part of Theorem \ref{thm:main}  (with $n=3$, $s=2$, $a_1a_2a_3=1$, $b_1b_2b_3=2$). This fully open three-species CST is in fact a sequestration network, and has nondegenerate multiple  positive steady states (see Remark in Section \ref{sec:seqnet}). This allows us to apply Theorem \ref{thm:inheritance} in the following way. At each step below, the network modifications are indicated in bold, and all modified networks have multiple positive nondegenerate steady states.   
First, we add species $VR$ and $VRR$ into the second and third reactions, together with inflow and outflow (Theorem \ref{thm:inheritance} part 2):
\begin{eqnarray}\label{eq:net2}
   RR\to 2R \qquad R+V\to {\boldsymbol{VR}} \qquad V+RR\to {\boldsymbol{VRR}}\\\nonumber
   RR\rightleftharpoons 0\qquad  R\rightleftharpoons 0\qquad  V\rightleftharpoons 0
   \qquad 
   {\boldsymbol{VR}}\boldsymbol{\rightleftharpoons} 0
    \qquad 
    {\boldsymbol{VRR}}\boldsymbol{\rightleftharpoons} 0
\end{eqnarray}   
Next, we add inflow/outflow reactions for the remaining species 
\begin{eqnarray}\label{eq:net3}
    &RR\to 2R \qquad R+V\to VR \qquad V+RR\to VRR\\ \nonumber
    &RR\rightleftharpoons 0\quad R\rightleftharpoons 0\quad  V\rightleftharpoons 0\quad 
    VR\rightleftharpoons 0\\ \nonumber
    &VRR\rightleftharpoons 0 \quad
    \boldsymbol{R^*VR^*}\boldsymbol{\rightleftharpoons} 0\quad  \boldsymbol{R^*VR^{*\Delta}}\boldsymbol{\rightleftharpoons} 0
\end{eqnarray}  
Since all seven species have inflows and outflow reactions,  the stoichiometric subspace of (\ref{eq:net3}) is ${\mathbb R}^7$, and it contains any reaction among the species of (\ref{eq:net3}). Theorem \ref{thm:inheritance} part 3 implies that adding the remaining reactions (including the reverse of the first three reactions in (\ref{eq:net3})) preserves multistationarity:

\begin{eqnarray}\label{eq:net4}
    &RR \oset{\rightharpoonup}{\boldsymbol{\leftharpoondown}}2R \quad 
 R+V\oset{\rightharpoonup}{\boldsymbol{\leftharpoondown}} VR \quad V+RR\oset{\rightharpoonup}{\boldsymbol{\leftharpoondown}}   VRR\quad VR+R{\boldsymbol{\rightleftharpoons}} VRR\\\nonumber
     &\qquad VR+R{\boldsymbol\rightleftharpoons} R^*VR^* \quad
     R^*VR^*{\boldsymbol\rightleftharpoons} R^*VR^{*\Delta}\quad VRR{\boldsymbol\rightleftharpoons} R^*VR^{*\Delta}\\\nonumber
     &RR\rightleftharpoons 0\quad R\rightleftharpoons 0\quad  V\rightleftharpoons 0\quad 
     VR\rightleftharpoons 0\\ \nonumber
     &VRR\rightleftharpoons 0 \quad
   {R^*VR^*}{\rightleftharpoons} 0\quad  {R^*VR^{*\Delta}}{\rightleftharpoons} 0
\end{eqnarray}  

\noindent Therefore the VEGFR dimerization network (\ref{eq:net4}) has multiple nondegenerate positive eqiulibria. 

 \bibliography{gsp}

\begin{thebibliography}{10}

\bibitem{Angeli.2010aa}
David Angeli, Patrick De~Leenheer, and Eduardo Sontag.
\newblock Graph-theoretic characterizations of monotonicity of chemical
  networks in reaction coordinates.
\newblock {\em J Math Biol}, 61(4):581--616, Oct 2010.

\bibitem{angeli2004detection}
David Angeli, James~E Ferrell, and Eduardo~D Sontag.
\newblock Detection of multistability, bifurcations, and hysteresis in a large
  class of biological positive-feedback systems.
\newblock {\em Proceedings of the National Academy of Sciences},
  101(7):1822--1827, 2004.

\bibitem{DSRcyles}
Xue Bai, Casian Pantea, Lorand Parajdi, Galyna Voitiuk, and Polly~Y. Yu.
\newblock Cycles in mass-action networks and multistationarity.
\newblock {\em in preparation}.

\bibitem{Banaji.2007aa}
M~Banaji, P~Donnell, and S~Baigent.
\newblock P matrix properties, injectivity, and stability in chemical reaction
  systems.
\newblock {\em SIAM J. Appl. Math}, 67(6):1523--1547, 2007.

\bibitem{Banaji.2009aa}
Murad Banaji and Gheorghe Craciun.
\newblock {Graph-theoretic approaches to injectivity and multiple equilibria in
  systems of interacting elements}.
\newblock {\em Communications in Mathematical Sciences}, 7(4):867 -- 900, 2009.

\bibitem{banaji2010graph}
Murad Banaji and Gheorghe Craciun.
\newblock Graph-theoretic criteria for injectivity and unique equilibria in
  general chemical reaction systems.
\newblock {\em Advances in Applied Mathematics}, 44(2):168--184, 2010.

\bibitem{banaji2016some}
Murad Banaji and Casian Pantea.
\newblock Some results on injectivity and multistationarity in chemical
  reaction networks.
\newblock {\em SIAM Journal on Applied Dynamical Systems}, 15(2):807--869,
  2016.

\bibitem{banaji2018inheritance}
Murad Banaji and Casian Pantea.
\newblock The inheritance of nondegenerate multistationarity in chemical
  reaction networks.
\newblock {\em SIAM Journal on Applied Mathematics}, 78(2):1105--1130, 2018.

\bibitem{Ye.2013aa}
Ye~Chen, Christopher Short, Ad{\'a}m~M Hal{\'a}sz, and Jeremy~S Edwards.
\newblock The impact of high density receptor clusters on {V}{E}{G}{F}
  signaling.
\newblock {\em Electronic proceedings in theoretical computer science},
  2013:37, 2013.

\bibitem{conradi2017graph}
Carsten Conradi and Maya Mincheva.
\newblock Graph-theoretic analysis of multistationarity using degree theory.
\newblock {\em Mathematics and Computers in Simulation}, 133:76--90, 2017.

\bibitem{Craciun.2019df}
Gheorghe Craciun.
\newblock Polynomial dynamical systems, reaction networks, and toric
  differential inclusions.
\newblock {\em SIAM Journal on Applied Algebra and Geometry}, 3:87--106, 01
  2019.

\bibitem{Craciun.2009aa}
Gheorghe Craciun, A~Dickenstein, A~Shiu, and Bernd Sturmfels.
\newblock Toric dynamical systems.
\newblock {\em Journal of Symbolic Computation}, 44(11):1551--1565, 2009.

\bibitem{craciun2005multiple}
Gheorghe Craciun and Martin Feinberg.
\newblock Multiple equilibria in complex chemical reaction networks: {I.} {T}he
  injectivity property.
\newblock {\em SIAM Journal on Applied Mathematics}, 65(5):1526--1546, 2005.

\bibitem{craciun2006multiple}
Gheorghe Craciun and Martin Feinberg.
\newblock Multiple equilibria in complex chemical reaction networks: extensions
  to entrapped species models.
\newblock {\em IEE Proceedings-Systems Biology}, 153(4):179--186, 2006.

\bibitem{delayPolly}
Gheorghe Craciun, Maya Mincheva, Casian Pantea, and Polly~Y. Yu.
\newblock A graph-theoretic condition for delay stability of reaction systems,
  2021.

\bibitem{Craciun.2011ox}
Gheorghe Craciun, Casian Pantea, and Eduardo Sontag.
\newblock Graph-theoretic analysis of multistability and monotonicity for
  biochemical reaction networks.
\newblock {\em Design and Analysis of Biomolecular Circuits}, pages 63--72, 01
  2011.

\bibitem{craciun2006understanding}
Gheorghe Craciun, Yangzhong Tang, and Martin Feinberg.
\newblock Understanding bistability in complex enzyme-driven reaction networks.
\newblock {\em Proceedings of the National Academy of Sciences},
  103(23):8697--8702, 2006.

\bibitem{Donnell.2014fg}
Pete Donnell, Murad Banaji, Anca Marginean, and Casian Pantea.
\newblock {C}o{N}t{R}ol: an open source framework for the analysis of chemical
  reaction networks.
\newblock {\em Bioinformatics}, 30(11), 2014.

\bibitem{feinberg2019foundations}
Martin Feinberg.
\newblock {\em Foundations of chemical reaction network theory}.
\newblock Springer, Switzerland, 2019.

\bibitem{feliu2013simplifying}
Elisenda Feliu and Carsten Wiuf.
\newblock Simplifying biochemical models with intermediate species.
\newblock {\em Journal of The Royal Society Interface}, 10(87):20130484, 2013.

\bibitem{felix2016analyzing}
Bryan F{\'e}lix, Anne Shiu, and Zev Woodstock.
\newblock Analyzing multistationarity in chemical reaction networks using the
  determinant optimization method.
\newblock {\em Applied Mathematics and Computation}, 287:60--73, 2016.

\bibitem{ferrell2002self}
James~E Ferrell~Jr.
\newblock Self-perpetuating states in signal transduction: positive feedback,
  double-negative feedback and bistability.
\newblock {\em Current opinion in cell biology}, 14(2):140--148, 2002.

\bibitem{Horn.1972ab}
FJM Horn.
\newblock Necessary and sufficient conditions for complex balancing in chemical
  kinetics.
\newblock {\em Archive for Rational Mechanics and Analysis}, 49(3):172--186,
  1972.

\bibitem{Horn.1972aa}
FJM Horn and R~Jackson.
\newblock General mass action kinetics.
\newblock {\em Archive for Rational Mechanics and Analysis}, 47(2):81--116,
  1972.

\bibitem{joshi2013complete}
Badal Joshi.
\newblock Complete characterization by multistationarity of fully open networks
  with one non-flow reaction.
\newblock {\em Applied Mathematics and Computation}, 219:6931--6945, 2013.

\bibitem{joshi2012simplifying}
Badal Joshi and Anne Shiu.
\newblock {Simplifying the Jacobian Criterion for precluding multistationarity
  in chemical reaction networks}.
\newblock {\em SIAM Journal on Applied Mathematics}, 72(3):857--876, 2012.

\bibitem{joshi2013atoms}
Badal Joshi and Anne Shiu.
\newblock Atoms of multistationarity in chemical reaction networks.
\newblock {\em Journal of Mathematical Chemistry}, 51(1):153--178, 2013.

\bibitem{joshi2015survey}
Badal Joshi and Anne Shiu.
\newblock A survey of methods for deciding whether a reaction network is
  multistationary.
\newblock {\em ``Chemical Dynamics" -- special issue of Mathematical Modelling
  of Natural Phenomena}, 10(5):47--67, 2015.

\bibitem{joshi2017small}
Badal Joshi and Anne Shiu.
\newblock Which small reaction networks are multistationary?
\newblock {\em SIAM Journal on Applied Dynamical Systems}, 16(2):802--833,
  2017.

\bibitem{VEGF}
Feilim Mac~Gabhann and Aleksander Popel.
\newblock Dimerization of {V}{E}{G}{F} receptors and implications for signal
  transduction: A computational study.
\newblock {\em Biophysical chemistry}, 128:125--39, 08 2007.

\bibitem{mincheva2007graph}
Maya Mincheva and Marc~R Roussel.
\newblock Graph-theoretic methods for the analysis of chemical and biochemical
  networks. i. multistability and oscillations in ordinary differential
  equation models.
\newblock {\em Journal of Mathematical Biology}, 55(1):61--86, 2007.

\bibitem{Mueller.2016aa}
Stefan M{\"u}ller, Elisenda Feliu, Georg Regensburger, Carsten Conradi, Anne
  Shiu, and Alicia Dickenstein.
\newblock Sign conditions for injectivity of generalized polynomial maps with
  applications to chemical reaction networks and real algebraic geometry.
\newblock {\em Foundations of Computational Mathematics}, 16(1):69--97, 2016.

\bibitem{Pantea.2012ss}
Casian Pantea.
\newblock On the persistence and global stability of mass-action systems.
\newblock {\em SIAM Journal on Mathematical Analysis}, 44(3):1636--1673, 2012.

\bibitem{Regan2012.aa}
E.R. Regan and W.C. Aird.
\newblock Dynamical systems approach to endothelial heterogeneity.
\newblock {\em Circ. Res.}, 111(1):110--130, 2012.

\bibitem{Shinar.2012aa}
Guy Shinar and Martin Feinberg.
\newblock Concordant chemical reaction networks.
\newblock {\em Mathematical Biosciences}, 240:92--113, 2012.

\bibitem{tang2019bistability}
Xiaoxian Tang and Jie Wang.
\newblock Bistability of sequestration networks.
\newblock {\em Discrete \& Continuous Dynamical Systems - B}, 26(3):1337--1357,
  2021.

\bibitem{wiuf2013power}
Carsten Wiuf and Elisenda Feliu.
\newblock Power-law kinetics and determinant criteria for the preclusion of
  multistationarity in networks of interacting species.
\newblock {\em SIAM Journal on Applied Dynamical Systems}, 12(4):1685--1721,
  2013.

\bibitem{yu2018mathematical}
Polly~Y Yu and Gheorghe Craciun.
\newblock Mathematical analysis of chemical reaction systems.
\newblock {\em Israel Journal of Chemistry}, 58(6-7):733--741, 2018.

\end{thebibliography}
 \bibliographystyle{plain}
\end{document}